\numberwithin{equation}{section}
\renewcommand{\epsilon}{\varepsilon}
\DeclareSymbolFont{SY}{U}{psy}{m}{n}
\DeclareMathSymbol{\emptyset}{\mathord}{SY}{'306}
\DeclareMathOperator{\ran}{Ran} \DeclareMathOperator{\Ker}{Ker}
\DeclareMathOperator{\sign}{sign} 
\DeclareMathOperator{\dom}{Dom}
\DeclareMathOperator{\sgn}{sgn}
\DeclareMathSymbol{\newtimes}{\mathbin}{SY}{'264}
\newcommand{\T}{\mathbb{T}}
\newcommand{\1}{\mathbb{I}}
\newcommand{\fL}{\mathfrak{L}}
\newcommand{\fa}{\mathfrak{a}}
\newcommand{\fb}{\mathfrak{b}}
\newcommand{\fh}{\mathfrak{h}}
\newcommand{\ft}{\mathfrak{t}}
\newcommand{\fv}{\mathfrak{v}}
\newcommand{\cH}{{\mathcal H}}
\newcommand{\cL}{{\mathcal L}}
\newtheorem{introtheorem}{Theorem}{\bf}{\it}
{\bf}{\it}
{\bf}{\it}
{\bf}{\it}
{\bf}{\it}
\newtheorem{theorem}{Theorem}[section]{\bf}{\it}
{\bf}{\it}
\newtheorem{corollary}[theorem]{Corollary}{\bf}{\it}
\newtheorem{example}[theorem]{Example}{\it}{\rm}
\newtheorem{lemma}[theorem]{Lemma}{\bf}{\it}
\newtheorem{remark}[theorem]{Remark}{\it}{\rm}
\newtheorem{definition}[theorem]{Definition}{\bf}{\it}
{\bf}{\it}
{\bf}{\it}
{\bf}{\it}
\newtheorem{hypothesis}[theorem]{Hypothesis}{\bf}{\it}
\title[Representation Theorems]{Representation Theorems for indefinite quadratic forms without spectral gap}
\author[S.\ Schmitz]{Stephan Schmitz$^*$}
\address{S.~Schmitz, FB 08 - Institut f\"{u}r Mathematik\\
Johannes Gutenberg-Universit\"{a}t Mainz\\\newline
Stau\-dinger Weg 9,
D-55099 Mainz,
Germany}
\email{schmist@uni-mainz.de}
\subjclass[2010]{Primary 47A07, 47A67; Secondary  15A63, 47A55}
\keywords{Indefinite quadratic form, representation theorem}
\thanks{* The material presented here is part of the author's Ph.~D.~ thesis}
\date{\today}
\begin{document}

\begin{abstract}
The First and Second Representation Theorem for sign-indefinite quadratic forms are extended.
We include new cases of unbounded forms associated with operators that do not necessarily have a spectral gap around zero.
The kernel of the associated operators is determined for special cases. This extends results by Grubi\v{s}i\'c, Kostrykin, Makarov and Veseli\'c in 
[Mathematika \textbf{59} (2013), 169 -- 189].
\end{abstract} 

\maketitle

\section{Introduction and main results}
In this work, we consider the Representation Theorems for symmetric sesquilinear forms in a Hilbert space \(\cH\). Before we introduce these Theorems, we fix the following notions.

A form \(\fb\) is said to be \emph{associated} with a self-adjoint operator \(B\) if 
\begin{equation}\label{eq:1repintro}\fb[x,y]=\langle x,By\rangle\quad \text{for all } x\in \dom[\fb],\;y\in \dom(B)\subseteq \dom[\fb].\end{equation}
If, in addition, the domain stability condition \begin{equation}\label{introdom}\dom(|B|^{1/2})=\dom[\fb]\end{equation} holds, then the form is said to be \textit{represented} by the operator \(B\), that is,  
\begin{equation}\label{eq:2repintro}\fb[x,y]=\langle |B|^{1/2}x,\sign(B)|B|^{1/2}y\rangle  \quad \text{for all }x,y\in \dom[\fb].\end{equation}

The representations \eqref{eq:1repintro} and \eqref{eq:2repintro} are usually called the First and Second Representation Theorem, respectively. 
Taken together, the Representation Theorems give the one-to-one correspondence of forms and operators.
These Representation Theorems however do not hold for arbitrary forms.

For bounded forms, the Representation Theorems hold true by the Riesz Representation Theorem.
Classical results verify the Representation Theorems for closed semibounded forms, see, e.g., \cite[Section VI.2]{K}.

For forms that are \textit{indefinite}, that is non-semibounded, we cannot expect that the Representation Theorems hold in general. As a consequence the correspondence between forms and operators is more complicated. It may for instance happen that two (or even infinitely many) forms define the same
self-adjoint operator B but this operator defines only one of these forms by means
of \eqref{eq:2repintro}, see, e.g. \cite[Example 2.11 and Proposition 4.2]{GKMV}.
In this case  \(\dom(|B|^{1/2})\neq \dom[\fb]\) and only the First but not the Second Representation Theorem holds.  

In \cite{GKMV}, the Representation Theorems are verified in a special case with new proofs. Namely, for indefinite forms  \(\fb\) of the type 
\begin{equation}\label{intro:b}\fb[x,y]=\langle A^{1/2}x,HA^{1/2}y\rangle,\;x,y\in \dom[\fb]=\dom(A^{1/2}),\end{equation}  where \(A\) and \(H\) are self-adjoint operators such that \(A\geq cI>0\) and \(H\) is bounded with \(\|H^{-1}\|\leq \alpha^{-1}<\infty\), the following theorem holds:

\textbf{Theorem.} \textit{Let \(\fb\) be given by \eqref{intro:b}. Then there exists a unique self-adjoint, boundedly invertible operator \(B=A^{1/2}HA^{1/2}\) with \((-\alpha c,\alpha c)\subset \rho(B)\) associated with the form \(\fb\). If in addition \(\dom(|B|^{1/2})=\dom(A^{1/2})\) holds, then the form \(\fb\) is represented by the operator \(B\).}

\vspace{6pt}

For this theorem to hold, the strict positivity of \(A\) is crucial since it allows to construct the bounded self-adjoint operator \(A^{-1/2}H^{-1}A^{-1/2}\) which is inverse to the operator \(B\).  

It is now a natural project to drop the strict positivity condition on \(A\) in this theorem. However, Example \ref{counterexample} below shows that the operator \(B\) may be non-closed if this condition is dropped.
Under additional assumptions however, the closedness and thus the self-adjointness of the operator  \(B=A^{1/2}HA^{1/2}\) can be preserved, so that the Representation Theorems can be extended.
\vspace{6pt}

In the present work, we assume \(H^{-1}\) to be bounded but assume \(A\) to be only non-negative. Our main results on the Representation Theorems in this work are the following: 
\begin{introtheorem}
Let \(A\geq0\) be self-adjoint and let \(H,H^{-1}\) be bounded, self-adjoint. Assume that there is a self-adjoint involution \(J_A\) commuting with \(A\) such that
 \[P_AHP_A\geq \alpha P_A,\quad P^\bot_AHP_A^\bot\leq -\alpha P_A^\bot\] for some \(\alpha \in(0,1] \), where \(P_A:=\frac{1}{2}(J_A+I)\) and \(P_A^\bot\) is the complementary projector. 

Then, there exists a unique self-adjoint operator \(B=A^{1/2}HA^{1/2}\) with \[\fb[x,y]=\langle x,By\rangle\quad \text{for all } x\in \dom[\fb],\;y\in \dom(B)\subseteq \dom[\fb].\] If additionally the domain stability condition \eqref{introdom} holds, then
\[\fb[x,y]=\langle |B|^{1/2}x,\sign(B)|B|^{1/2}y\rangle  \quad \text{for all }x,y\in \dom[\fb].\]
\end{introtheorem}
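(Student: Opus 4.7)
The plan is to reduce the indefinite form $\fb$ to a closed \emph{sectorial} form via a twist with $J_A$, apply Kato's first representation theorem to obtain an $m$-sectorial operator $\tilde B$, and then upgrade $B := \tilde B J_A$ to a self-adjoint operator using $J$-symmetry. Since $J_A$ commutes with $A$, it commutes with $A^{1/2}$, so $\dom(A^{1/2})$ is $J_A$-invariant. Setting $K := P_A H P_A - P_A^\bot H P_A^\bot$, the hypothesis gives $K \geq \alpha I$ and $[K, J_A] = 0$; splitting $H = KJ_A + H_{\mathrm{od}}$ into block-diagonal plus off-diagonal parts, the anti-commutation $J_A H_{\mathrm{od}} = - H_{\mathrm{od}} J_A$ yields $HJ_A = K + iN$ for some bounded self-adjoint $N$. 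I then consider the twisted form
\[
\tilde\fb[x, y] := \fb[x, J_A y] = \langle A^{1/2} x, (K + iN) A^{1/2} y\rangle, \qquad x, y \in \dom(A^{1/2}).
\]
Its real part $\langle A^{1/2}\cdot,\, K A^{1/2}\cdot\rangle$ is comparable to $\alpha \|A^{1/2}\cdot\|^2$ and hence closed, and its imaginary part is bounded by $(\|N\|/\alpha)$ times the real part, so $\tilde\fb$ is closed sectorial with numerical range in a sector about $\R_+$. Kato's theorem then produces a unique $m$-sectorial $\tilde B$ representing $\tilde\fb$.

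Next, Hermiticity of $\fb$ translates into the $J$-Hermitian identity $\tilde\fb[x, y] = \overline{\tilde\fb[J_A y, J_A x]}$; in particular the numerical range of $\tilde\fb$ is symmetric about $\R$. Applied on the level of operators this identity yields the inclusion $J_A \tilde B J_A \subseteq \tilde B^*$. Both sides are $m$-sectorial with numerical range in the same sector symmetric about $\R$, and the standard maximality principle ($m$-sectorial extensions inside a fixed sector must coincide) upgrades this to the equality $\tilde B^* = J_A \tilde B J_A$. Defining $B := \tilde B J_A$ on $\dom(B) = J_A \dom(\tilde B)$, I then compute $B^* = J_A \tilde B^* = J_A \cdot J_A \tilde B J_A = \tilde B J_A = B$, so $B$ is self-adjoint. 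The first representation follows from $\fb[x, y] = \tilde\fb[x, J_A y] = \langle x, By\rangle$, and unwinding the adjoint identifies $\dom(B) = \{y \in \dom(A^{1/2}) : H A^{1/2} y \in \dom(A^{1/2})\}$ with $By = A^{1/2} H A^{1/2} y$. For uniqueness, any other self-adjoint $B'$ representing $\fb$ gives $B' J_A \subseteq \tilde B$ by Kato's theorem, and transferring this through the adjoint produces $B \subseteq B'$, hence $B = B'$.

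For the second representation, assuming $\dom(|B|^{1/2}) = \dom[\fb]$, the closed graph theorem gives equivalence of the $|B|^{1/2}$- and $A^{1/2}$-graph norms on this common domain. The sesquilinear form $(x,y) \mapsto \langle |B|^{1/2} x, \sign(B) |B|^{1/2} y\rangle$ is continuous in this norm and agrees with $\fb$ on $\dom[\fb] \times \dom(B)$ by the first representation; since $\dom(B)$ is a core for $|B|^{1/2}$, the identity extends by continuity to all of $\dom[\fb] \times \dom[\fb]$. \emph{The main obstacle} is the equality $\tilde B^* = J_A \tilde B J_A$: the $J$-Hermiticity only delivers one inclusion, and closing it to equality---on which self-adjointness of $B$ rests---relies both on sectoriality placing the three operators in a common real-symmetric sector and on $m$-sectorial maximality inside that sector. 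Both ingredients trace back to the assumption $[J_A, A] = 0$; without it the split $H = KJ_A + H_{\mathrm{od}}$ would not intertwine with $A^{1/2}$, the real part of $\tilde\fb$ would cease to be closed, and indeed Example~\ref{counterexample} shows that $B$ may fail to be closed.
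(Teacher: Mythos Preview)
Your argument is correct, but it follows a genuinely different route from the paper. The paper proceeds by an \emph{additive} shift: it considers the symmetric form $\tilde\fb_{\text{paper}}[x,y]=\fb[x,y]+\langle x,J_Ay\rangle$, rewrites it as $\langle(A+I)^{1/2}x,\widetilde H(A+I)^{1/2}y\rangle$ with a bounded, boundedly invertible $\widetilde H$ still satisfying the gap condition~\eqref{createsgap}, and then invokes \cite[Theorem~2.3]{GKMV} for the strictly positive operator $A+I$ to obtain a self-adjoint $\widetilde B$; one then sets $B=\widetilde B-J_A$. Your approach is a \emph{multiplicative} twist: you pass to the non-symmetric form $\tilde\fb[x,y]=\fb[x,J_Ay]=\langle A^{1/2}x,(K+iN)A^{1/2}y\rangle$, observe that $K\geq\alpha I$ makes it closed sectorial, apply Kato's theorem for sectorial forms to get an $m$-sectorial $\tilde B$, and recover self-adjointness of $B=\tilde BJ_A$ via the $J$-symmetry identity $J_A\tilde BJ_A=\tilde B^\ast$ and $m$-accretive maximality.

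What each buys: the paper's route is modular---it isolates the passage from non-negative to strictly positive $A$ as a single additive perturbation and then quotes the indefinite representation theorem of \cite{GKMV} as a black box; this also makes the spectral-gap information $(-c,c)\subset\rho(B+J_A)$ immediate (Remark~\ref{def:B+JA}). Your route is self-contained, needing only Kato's sectorial theory rather than \cite{GKMV}, and it exposes clearly why $[J_A,A]=0$ is essential: without it the real part of the twisted form would not be closed. The maximality step $J_A\tilde BJ_A\subseteq\tilde B^\ast\Rightarrow J_A\tilde BJ_A=\tilde B^\ast$ is the delicate point, but your justification (both sides $m$-accretive in the same real-symmetric sector) is sound. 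For the Second Representation Theorem the two arguments essentially coincide: both use that $\dom(B)$ is a core for $|B|^{1/2}$ and extend by continuity in the equivalent graph norms.
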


In particular, we obtain the following result for forms \(\fb=\fa+\fv\) consisting of a diagonal part \(\fa\) and an off-diagonal part \(\fv\).

\begin{introtheorem}
Let \(A\geq 0\) be a self-adjoint operator, \(J_A\) a self-adjoint involution commuting with \(A\) and  \[\fa[x,y]:=\langle A^{1/2}x,J_AA^{1/2}y\rangle,\quad \dom[\fa]=\dom(A^{1/2}).\] Suppose that \(\fv\) is a symmetric form and \(\beta\) is a finite constant with
\[\dom[\fv]\supseteq\dom[\fa],\quad \fv[J_Ax,y]=-\fv[x,J_Ay], \quad|\fv[x]|\leq \beta\big\|(A+I)^{1/2}x\big\|^2,\;x,y\in \dom[\fa].\]
Then the form \(\fb:=\fa+\fv\) is associated with a unique self-adjoint operator \(B\). If additionally \eqref{introdom} holds, then \(\fb\) is represented by the operator \(B\). 
The kernel of the operator \(B\) can explicitly be written as 
\[\Ker(B)=(\Ker(A_+)\cap \cL_+)\oplus(\Ker(A_-)\cap \cL_-),\] where \(A=A_+\oplus A_-\) is the decomposition induced by \(J_A\) and 
\[\cL_\pm:=\big\{x_\pm\in \dom(A_\pm^{1/2})\;| \; \fv[x_+\oplus0,0\oplus x_-]=0 \quad \text{for all }x_\mp\in \dom(A_\mp^{1/2})\big\}.\] 
\end{introtheorem}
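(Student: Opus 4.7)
My plan is to reduce the statement to the first introductory theorem by writing $\fb$ in the required shape with auxiliary operator $\hat A:=A+I\geq I$. The bound $|\fv[x]|\leq\beta\|(A+I)^{1/2}x\|^{2}$ and polarisation produce, via Riesz representation on $(\dom(A^{1/2}),\langle\cdot,(A+I)\cdot\rangle)$, a bounded self-adjoint $V\in\cB(\cH)$ with $\|V\|\leq\beta$ and $\fv[x,y]=\langle(A+I)^{1/2}x,V(A+I)^{1/2}y\rangle$. The anti-diagonal hypothesis $\fv[J_{A}x,y]=-\fv[x,J_{A}y]$ combined with $[J_{A},(A+I)^{1/2}]=0$ translates to the operator identity $J_{A}V+VJ_{A}=0$, so $V$ is off-diagonal with respect to $P_{A}$. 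Since $[J_{A},A]=0$, we have $\fa[x,y]=\langle(A+I)^{1/2}x,J_{A}A(A+I)^{-1}(A+I)^{1/2}y\rangle$, and therefore
\[
\fb[x,y]=\langle(A+I)^{1/2}x,\hat H(A+I)^{1/2}y\rangle,\qquad \hat H:=J_{A}A(A+I)^{-1}+V,
\]
a bounded self-adjoint operator on $\cH$.

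The first introductory theorem does not apply directly to $(A+I,\hat H,J_{A})$ because $P_{A}\hat HP_{A}=A_{+}(A_{+}+I)^{-1}\oplus 0$ is only $\geq 0$, losing strict block-positivity whenever $\Ker(A_{\pm})\neq\{0\}$. To bypass this I would regularise by $\hat H_{\varepsilon}:=\hat H+\varepsilon J_{A}$ for $\varepsilon\in(0,1]$, whose diagonal blocks are $\pm(A_{\pm}(A_{\pm}+I)^{-1}+\varepsilon)$ and are bounded away from zero; a Schur-complement argument (exploiting the off-diagonality of $V$) gives bounded invertibility of $\hat H_{\varepsilon}$. The first introductory theorem then yields self-adjoint operators $B_{\varepsilon}=(A+I)^{1/2}\hat H_{\varepsilon}(A+I)^{1/2}$ with $0\in\rho(B_{\varepsilon})$, representing $\fb_{\varepsilon}[x,y]:=\fb[x,y]+\varepsilon\langle(A+I)^{1/2}x,J_{A}(A+I)^{1/2}y\rangle$. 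Writing $\psi_{\varepsilon}:=(B_{\varepsilon}-\ii)^{-1}\phi$, the form identity $\fb_{\varepsilon}[x,\psi_{\varepsilon}]-\ii\langle x,\psi_{\varepsilon}\rangle=\langle x,\phi\rangle$ tested with $x=J_{A}\psi_{\varepsilon}$, together with the cancellation $\Re\fv[J_{A}\psi_{\varepsilon},\psi_{\varepsilon}]=0$ coming from anti-diagonality, gives the uniform bound $\|A^{1/2}\psi_{\varepsilon}\|\leq\|\phi\|$, whence $\|(A+I)^{1/2}\psi_{\varepsilon}\|\leq\sqrt{2}\,\|\phi\|$. An analogous test with $x=\chi:=\psi_{\varepsilon}-\psi_{\varepsilon'}$ in the difference of the two form identities yields $\|\chi\|\leq 2\sqrt{|\varepsilon-\varepsilon'|}\,\|\phi\|$, so $(B_{\varepsilon}-\ii)^{-1}$ is strongly Cauchy as $\varepsilon\to 0^{+}$ and its limit is the resolvent of a self-adjoint operator $B$. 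Passing to the weak limit in $\fb_{\varepsilon}[x,\psi_{\varepsilon}]=\langle x,B_{\varepsilon}\psi_{\varepsilon}\rangle$, using $\hat H_{\varepsilon}\to\hat H$ in norm and the uniform $(A+I)^{1/2}$-bound on $\psi_{\varepsilon}$, identifies $B$ as associated with $\fb$; uniqueness follows as in the first introductory theorem, and under \eqref{introdom} the Second Representation is a polarisation identity.

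For the kernel characterisation, let $y\in\dom(B)$ with $By=0$, so $\fb[x,y]=0$ for every $x\in\dom[\fa]$. Testing with $x=J_{A}y\in\dom[\fa]$ gives $\|A^{1/2}y\|^{2}+\fv[J_{A}y,y]=0$; the anti-diagonality combined with symmetry of $\fv$ forces $\Re\fv[J_{A}y,y]=0$, so $\|A^{1/2}y\|^{2}=0$ and therefore $y\in\Ker(A_{+})\oplus\Ker(A_{-})$. Consequently $\fa\equiv 0$ on $\{y\}$ and the equation $\fb[x,y]=0$ reduces to $\fv[x,y]=0$ for all $x\in\dom[\fa]$; decomposing $x=x_{+}\oplus x_{-}$ and using the off-diagonality of $\fv$ gives the two independent conditions $\fv[x_{+}\oplus 0,0\oplus y_{-}]=0$ and $\fv[0\oplus x_{-},y_{+}\oplus 0]=0$, i.e., $y_{-}\in\cL_{-}$ and $y_{+}\in\cL_{+}$. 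The reverse inclusion is a direct verification. The principal obstacle I anticipate is justifying the uniform $(A+I)^{1/2}$-bound on $\psi_{\varepsilon}$ underpinning the resolvent convergence: without a spectral gap around zero, this bound hinges on the anti-diagonality of $\fv$ and on the specific $\varepsilon J_{A}$-regularisation — the test function $J_{A}\psi_{\varepsilon}$ aligns the positive and negative block contributions so that they combine rather than cancel.
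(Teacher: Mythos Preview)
Your resolvent-limit construction of $B$ works (the uniform bound and the Cauchy estimate both go through, and injectivity of the limiting resolvent follows by passing to the limit in the form identity), but it is far more laborious than the paper's argument. The paper does not regularise $\hat H$; it adds the \emph{bounded} form $\langle x,J_{A}y\rangle$ to $\fb$. In your notation this replaces $\hat H$ by $\hat H+(A+I)^{-1}J_{A}=J_{A}+V$, whose diagonal blocks are exactly $\pm I_{\cH_{\pm}}$, so \cite[Theorem~2.3]{GKMV} applies directly with the strictly positive operator $A+I$ and produces a self-adjoint, boundedly invertible $\widetilde B=(A+I)^{1/2}(J_{A}+V)(A+I)^{1/2}$; then $B:=\widetilde B-J_{A}$ is self-adjoint as a bounded perturbation, and no limit is needed. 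Your $\varepsilon J_{A}$ at the level of $\hat H$ corresponds to the \emph{unbounded} operator perturbation $\varepsilon J_{A}(A+I)$, which is precisely why a convergence argument becomes necessary. The paper's bounded shift also delivers the explicit description $\dom(B)=\dom(\widetilde B)$ for free, which your construction does not.

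Your forward kernel inclusion (test with $x=J_{A}y$ and use $\Re\fv[J_{A}y,y]=0$) is in fact slicker than the paper's component-wise contradiction argument. For the reverse inclusion, however, ``direct verification'' glosses over a point: $\fb[x,y]=0$ for all $x\in\dom[\fb]$ does not by itself place $y$ in $\dom(B)$. The paper checks this via the explicit product representation $B=(A+I)^{1/2}\widehat H(A+I)^{1/2}$; in your framework you should instead note that the relation $\fb[x,y]=\langle x,z\rangle$ defines a symmetric extension of the self-adjoint $B$, hence coincides with $B$. Finally, the Second Representation is not ``a polarisation identity'': one must extend $\fb[x,y]=\langle|B|^{1/2}x,\sign(B)|B|^{1/2}y\rangle$ from $y\in\dom(B)$ to all of $\dom[\fb]$ by density, using that $\dom(B)$ is a core for $|B|^{1/2}$ and that the graph norms of $(A+I)^{1/2}$ and $|B|^{1/2}$ are equivalent under \eqref{introdom}.
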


The general idea behind the proofs of the Representation Theorems is to consider the perturbed form \(\tilde{\fb}=\fb+J_A\) that is in the framework of  \cite{GKMV}, and to pull back the results to the unperturbed form \(\fb\). 

The domain stability condition \eqref{introdom} is in general hard to verify directly. In \cite{GKMV}, equivalent statements as well as sufficient conditions for \eqref{introdom} are given. These statements can be generalised to the situation here. Namely, the inclusions \(\dom(A^{1/2})\subseteq\dom(|B|^{1/2})\), \(\dom(A^{1/2})\subseteq\dom(|B|^{1/2})\), and \(\sgn(B)\dom(A^{1/2})\subseteq \dom(A^{1/2})\) are equivalent to each other for any choice of the value \(\sgn(0)\in \{-1,1\}\) of the unitary sign-function. Furthermore, in each of the three cases \(H\dom(A^{1/2})\subseteq \dom(A^{1/2})\), \(H\geq cI>0\) and \(B\) semibounded, respectively, the domain stability condition  \eqref{introdom} is satisfied, see Section \ref{secDom} below.
\vspace{6pt}
 
The paper is organized as follows: In Section 2, we prove the First Representation Theorem in the general setting (Theorem \ref{1repgeneral}) and for off-diagonal perturbations of indefinite diagonal forms (Theorem \ref{1repoffdiag}). In the setting of Theorem \ref{1repoffdiag}, we give a characterisation for the kernel of the associated operator (Theorem \ref{thm:incl}). 

Section 3 is devoted to the Second Representation Theorem (Theorem \ref{2rep}) and the analysis of the domain stability condition \eqref{introdom}.   
We give statements equivalent to the domain stability condition as well as sufficient criteria , Theorem \ref{equivalent} and Lemma \ref{sufficient}, respectively.

Theorem 1 is a combination of Theorems {\ref{1repgeneral}} and {\ref{2rep}}. Theorem 2 can be derived from Theorems {\ref{1repoffdiag}}, {\ref{2rep}} and {\ref{thm:incl}}.

We use the following notation: The domain  and range of a densely defined operator \(T\) on a Hilbert space \(\cH\) are denoted by \(\dom(T)\) and \(\ran(T)\). The adjoint operator of \(T\) and its resolvent set  are denoted by \(T^*\) and \(\rho(T)\).
For forms \(\fb\), we denote the domain by \(\dom[\fb]\) and write \(\fb[x]:=\fb[x,x]\) for shortness. 
The inner product, norm and identity operator on \(\cH\) are  
denoted by \( \langle \;\cdot\;,\;\cdot\;\rangle_{\cH}\), \(\left\|\;\cdot\;\right\|_{\cH}\) and \(I_{\cH}\) respectively, where the subscript \(\cH\) is omitted if no confusion can arise. Finally, if \(P\) is an orthogonal projector, we write \(P^\bot=I-P\) for the complementary projector. 

\subsection*{Acknowledgements.}
The author would like to thank his Ph.\ D.\ advisor Vadim Kostrykin for the introduction to this field of research and many useful discussions.
The author would also like to thank Albrecht Seelmann and Amru Hussein for helpful remarks on the manuscript.

\section{The First Representation Theorem}

\subsection{The general case} 
To start, we fix the following assumptions. 

\begin{hypothesis}\label{assumption1}

Let \(A\) and \(H\) be self-adjoint operators in \(\cH\) such that \(A\) is non-negative and \(H\) is bounded and boundedly invertible.
Furthermore, suppose that there exists a self-adjoint involution \(J_A\neq \pm I\) commuting with \(A\) such that the orthogonal projectors \(P_A:=\frac{1}{2}(I+J_A)\) and \(P_A^\bot=I-P_A\) satisfy 

\begin{equation}\label{createsgap} P_AHP_A\geq \alpha P_A,\quad P^\bot_AHP_A^\bot\leq -\alpha P^\bot_A \quad\text{for some }\alpha \in (0,1], \end{equation}
that is, \(\langle P_Ax,HP_Ax\rangle\geq \alpha \left\| P_Ax\right\|^2\quad\text{and}\quad \langle P_A^\bot x, HP_A^\bot x\rangle\leq -\alpha \|P_A^\bot x\|^2\) for all \(x\in \cH\).

\end{hypothesis}
Note that the condition \eqref{assumption1} in the hypothesis above is not always satisfied, see Example \ref{counterexample} below.

\begin{remark}\label{remarks} 
The following observations can be made under Hypothesis \ref{assumption1}.
\begin{enumerate}
\item  The involution \(J_A\) induces an orthogonal decomposition 
\[\cH=\ran(P_A) \oplus \ran(P_A^\bot)=:\cH_+\oplus \cH_-\] of the Hilbert space \(\cH\). Since \(P_A\) and \(P_A^\bot\) commute with \(A\), the subspaces \(\cH_+\) and \(\cH_-\) are reducing subspaces for the operator \(A\), see \cite[Section 2.5]{Wei} for this notion.   

With respect to this decomposition we have the block representations 
 \begin{equation*}J_A=\begin{pmatrix}I_{\cH_+}&0\\ 0&-I_{\cH_-}\end{pmatrix},\quad A=\begin{pmatrix}A_+&0\\ 0&A_-\end{pmatrix} \end{equation*} with  \(A_+:=P_AAP_A,\;A_-:=P_A^\bot AP_A^\bot\),\label{p:A+} where the operators \(A_{\pm}\)  are self-adjoint on the Hilbert spaces \(\cH_\pm\) with domains \(\dom(A)\cap \cH_\pm\), respectively. 

In this sense, we can always assume \(J_A\) and \(A\) to be diagonal block operator matrices of this structure. 
\item For any self-adjoint involution \(J_A\), a bounded self-adjoint operator \(\cH\) 
can always be represented as a block operator with respect to the decomposition induced by \(J_A\). Namely 
\begin{equation*}\qquad H=\begin{pmatrix}H_+&T\\ T^\ast &H_-\end{pmatrix}\quad \text{with}\;H_+:=P_AHP_A, \;H_-:=P_A^\bot H P_A^\bot,\; T:=P_AHP_A^\bot,\end{equation*} where \(H_\pm \colon \cH_\pm\to \cH_\pm\) are bounded, self-adjoint and \(T\colon \cH_-\to \cH_+\) is bounded.

Thus, condition \eqref{createsgap} can be rewritten as \begin{equation}\label{invert}H_+\geq \alpha I_{\cH_+}\quad\text{and}\quad H_-\leq  -\alpha I_{\cH_-}\text{ for some }\alpha \in (0,1].\end{equation} 
Note that any bounded self-adjoint operator \(H\) satisfying \eqref{invert} is automatically boundedly invertible, see \cite[Remark 2.8]{KMM}. 
\end{enumerate}
\end{remark}

Before we state the First Representation Theorem in the most general version considered here, recall that for a constant \(c\geq 0\) and a semibounded, self-adjoint operator \(S\geq -cI\), the domain identity 
\begin{equation}\label{dom}
\dom(|S|^{1/2})=\dom\big((S+(c+1)I)^{1/2}\big)\end{equation} 
holds by functional calculus. This allows to shift the considerations from a non-negative operator \(A^{1/2}\) to a strictly positive operator \((A+I)^{1/2}\) without changing the domain.  

\begin{theorem}[The First Representation Theorem]\label{1repgeneral}

\hspace{12pt}\\Assume Hypothesis \ref{assumption1}, and let \(\fb\) be the symmetric sesquilinear form given by
\begin{equation*}
	\fb[x,y]:=\langle A^{1/2}x, HA^{1/2}y\rangle,\quad \dom[\fb]=\dom(A^{1/2}).
\end{equation*}
Then, there exists a unique self-adjoint operator \(B\) with \(\dom(B)\subseteq \dom[\fb]\) and

\begin{equation*}
	\fb[x,y]=\langle x,By\rangle \quad \text{for all }x\in \dom[\fb], \;y\in \dom(B).
\end{equation*}
Moreover, the operator \(B\) is given by  
\begin{equation*}
	B=A^{1/2}HA^{1/2}
\end{equation*} on the natural domain 

\begin{equation*}
	\dom(B)=\{x\in \dom(A^{1/2})\;|\; HA^{1/2}x\in \dom(A^{1/2})\}.	
\end{equation*}
Furthermore, \(\dom(B)\) is a core for the operators \((A+I)^{1/2}\) and \(A^{1/2}\).
\begin{proof}

Consider the perturbed form \[\tilde{\fb}:=\tilde\fb+J_A\quad\text{on}\quad\dom[\tilde{\fb}]:=\dom[\fb]\] given by 
\[\tilde{\fb}[x,y]=\fb[x,y]+\langle x,J_Ay\rangle.\]
Using the domain equality \(\dom(A^{1/2})=\dom((A+I)^{1/2})\) and the commutativity of \(J_A\) and \(P_A\) with functions of \(A\), the perturbed form can be rewritten as
\begin{equation*}
	\tilde{\fb}[x,y]=\langle (A+I)^{1/2}x, \widetilde{H}(A+I)^{1/2}y\rangle, 
\end{equation*}
where 

\begin{equation}\label{def:tildeH}
	\begin{split}\widetilde{H}&:=(A^{1/2}(A+I)^{-1/2})HA^{1/2}(A+I)^{-1/2}+(A+I)^{-1}J_A\\ &=:H_0+(A+I)^{-1}J_A.\end{split}
\end{equation} 
 In this case, the operator \(\widetilde{H}\) is self-adjoint and bounded since \(A^{1/2}(A+I)^{-1/2}\) is bounded and self-adjoint by functional calculus.  

Let \(x\in \cH\), then we verify 
\[\langle P_Ax,H_0P_Ax\rangle=\langle  P_AA^{1/2}(A+I)^{-1/2}x,HP_AA^{1/2}(A+I)^{-1/2}x\rangle.\]
By hypothesis \eqref{createsgap}, we get the estimate
\[\langle P_Ax,H_0P_Ax\rangle\geq \alpha \langle P_AA^{1/2}(A+I)^{-1/2}x,P_AA^{1/2}(A+I)^{-1/2}x\rangle.\]
 Using the commutativity of \(P_A\) with functions of \(A\) and the equality \[(A^{1/2}(A+I)^{-1/2})^2=A(A+I)^{-1}=I-(A+I)^{-1},\] we can rewrite this estimate as 
\[\langle P_Ax,H_0P_Ax\rangle\geq	\alpha \langle P_Ax,P_Ax\rangle-\alpha\langle P_Ax,(A+I)^{-1}P_Ax\rangle.\]
With definition \eqref{def:tildeH}, the equality \(J_AP_A=P_A\), and \(\alpha\leq 1\), it follows that

\[P_A\widetilde{H}P_A\geq \alpha P_A-\alpha P_A(A+I)^{-1}P_A+P_A(A+I)^{-1}J_AP_A\geq\alpha P_A.\] 

In a similar way, noting that \(J_AP_A^\bot=-P_A^\bot\), we obtain that \(P^\bot_A\widetilde{H}P^\bot_A\leq -\alpha P^\bot_A\). As a consequence \(\widetilde{H}\) is boundedly invertible, see \cite[Remark 2.8]{KMM}.

Since \(A+I\) is strictly positive and \(\widetilde{H}\) bounded, boundedly invertible, we can apply the First Representation Theorem \cite[Theorem 2.3]{GKMV} to the form \(\tilde{\fb}\).
We obtain that the operator \[\widetilde{B}:=(A+I)^{1/2}\widetilde{H}(A+I)^{1/2}\] on its natural domain 
\[\dom(\widetilde{B})=\big\{x\in \dom(A+I)^{1/2}\;\big|\;\widetilde{H}(A+I)^{1/2}x\in \dom(A+I)^{1/2}\big\} \subseteq \dom(A^{1/2})\]  is the unique self-adjoint operator with \(\dom(\widetilde{B})\subseteq \dom[\tilde\fb]\) associated with the form \(\tilde{\fb}\), that is,

\[\tilde{\fb}[x,y]=\langle x, \widetilde{B}y\rangle \quad\text{for all}\;x\in \dom[\fb],\;y\in \dom(\widetilde{B}).\]
Additionally, \(\dom(\widetilde{B})\) is a core for \((A+I)^{1/2}\).

Setting \(B:=\widetilde{B}-J_A\) on \(\dom(B):=\dom(\widetilde{B})\), we obtain that 
\[\fb[x,y]=\langle x,By\rangle \quad\text{for all }x\in \dom[\fb],\; y\in \dom(B).\] 
Furthermore, we have
\[\begin{aligned}\dom(B)&=\big\{x\in \dom(A^{1/2})\;\big|\; A^{1/2}(A+I)^{-1/2}HA^{1/2}x\in \dom(A^{1/2})\big\}\\ &=
\big\{x\in \dom(A^{1/2})\;\big|\; (A+I)^{-1/2}HA^{1/2}x\in \dom(A)\big\}
\\&=\big\{x\in \dom(A^{1/2})\;\big|\; HA^{1/2}x\in \dom(A^{1/2})\big\}\end{aligned}\] and, hence,
\[B=(A+I)^{1/2}\widetilde{H}(A+I)^{1/2}-J_A=A^{1/2}HA^{1/2}.\]
The core property with respect to \(A^{1/2}\) is a direct consequence of the equivalence of the corresponding graph norms for \(A^{1/2}\) and \((A+I)^{1/2}\). 
\end{proof}
\end{theorem}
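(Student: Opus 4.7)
The plan is to reduce to the strictly positive case handled in \cite{GKMV} via the perturbation device flagged in the introduction. I would first consider the auxiliary form
$$\tilde{\fb}[x,y] := \fb[x,y] + \langle x, J_A y\rangle, \quad \dom[\tilde{\fb}] := \dom(A^{1/2}),$$
whose purpose is to replace the non-negative $A$ by the strictly positive $A+I$ without changing the form domain, since $\dom(A^{1/2}) = \dom((A+I)^{1/2})$. Using that $J_A$ commutes with functions of $A$, I would rewrite
$$\tilde{\fb}[x,y] = \langle (A+I)^{1/2}x,\, \widetilde{H}\,(A+I)^{1/2}y\rangle,$$
where
$$\widetilde{H} := A^{1/2}(A+I)^{-1/2}\,H\,A^{1/2}(A+I)^{-1/2} + (A+I)^{-1}J_A$$
is bounded and self-adjoint.

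The critical check is that $\widetilde{H}$ inherits the sign condition \eqref{createsgap}. The first summand carries the correct sign but is scaled by the factor $A(A+I)^{-1} = I - (A+I)^{-1}$, whose spectrum may touch zero on $\Ker(A)$; this is exactly where a naive approach fails. The correction term $(A+I)^{-1}J_A$ is engineered to cancel the deficit: since $J_AP_A = P_A$ and $J_AP_A^\bot = -P_A^\bot$, and since $\alpha \leq 1$, the two pieces should combine to give $P_A\widetilde{H}P_A \geq \alpha P_A$ and $P_A^\bot\widetilde{H}P_A^\bot \leq -\alpha P_A^\bot$. By Remark \ref{remarks}, this in turn forces $\widetilde{H}$ to be boundedly invertible.

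With $A+I$ strictly positive and $\widetilde{H}$ bounded and boundedly invertible, the hypotheses of \cite[Theorem 2.3]{GKMV} (quoted in the introduction) are satisfied. This yields a unique self-adjoint operator $\widetilde{B} = (A+I)^{1/2}\widetilde{H}(A+I)^{1/2}$ on its natural domain that represents $\tilde{\fb}$ in the first-representation sense, with $\dom(\widetilde{B})$ a core for $(A+I)^{1/2}$. I would then pull back by defining $B := \widetilde{B} - J_A$ on $\dom(B) := \dom(\widetilde{B})$; boundedness and self-adjointness of $J_A$ imply $B$ is self-adjoint, and subtracting $\langle x, J_A y\rangle$ from $\tilde{\fb}$ restores $\fb$, so $\fb[x,y] = \langle x, By\rangle$ for $y \in \dom(B)$, $x \in \dom[\fb]$, and uniqueness of $B$ is inherited from that of $\widetilde{B}$.

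It remains to identify $B$ and its natural domain concretely. Unpacking the condition $\widetilde{H}(A+I)^{1/2}x \in \dom((A+I)^{1/2})$ in three steps, using the definition of $\widetilde H$ and $\dom((A+I)^{1/2}) = \dom(A^{1/2})$, I would successively rewrite it as $(A+I)^{-1/2}HA^{1/2}x \in \dom(A)$ and finally as $HA^{1/2}x \in \dom(A^{1/2})$, which gives the stated natural domain; the identity $B = A^{1/2}HA^{1/2}$ on this domain then follows by a direct algebraic computation. The core property for $A^{1/2}$ is immediate from the one for $(A+I)^{1/2}$, since the two graph norms are equivalent on $\dom(A^{1/2})$. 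The main obstacle I anticipate is the sign-condition computation for $\widetilde{H}$: because $A^{1/2}(A+I)^{-1/2}$ need not commute with $H$, some care is required to keep the sandwiching inside $\ran(P_A)$ and $\ran(P_A^\bot)$ so that \eqref{createsgap} can be applied and then recombined with the $J_A$ correction cleanly.
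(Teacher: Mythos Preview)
Your proposal is correct and follows essentially the same route as the paper's proof: perturb $\fb$ by $J_A$, rewrite $\tilde\fb$ against $(A+I)^{1/2}$ with the bounded self-adjoint $\widetilde H$, verify the sign condition \eqref{createsgap} for $\widetilde H$ via the identity $A(A+I)^{-1}=I-(A+I)^{-1}$ and the commutativity of $P_A$ with functions of $A$, invoke \cite[Theorem 2.3]{GKMV}, and then subtract $J_A$ to recover $B=A^{1/2}HA^{1/2}$ on its natural domain together with the core property. The only point to add is that the ``care'' you flag is handled exactly as you suspect: since $P_A$ commutes with $A^{1/2}(A+I)^{-1/2}$, the sandwich $P_A H_0 P_A$ places $P_A$ directly against $H$ so that \eqref{createsgap} applies, and then $\alpha\le 1$ lets the $(A+I)^{-1}J_A$ term absorb the deficit.
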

Note that the operator \(B\) constructed in the theorem above is in general not invertible although the perturbed operator \(\widetilde{B}=B+J_A\) has a bounded inverse.
The idea to create a spectral gap by a bounded perturbation is already present in \cite[Theorem 2.4]{Ve} by Veseli\'{c}. There however, it is not clear whether a suitable perturbation creating the gap exists and how it can be found. Also, a corresponding Second Representation Theorem is not considered in \cite{Ve}.

We now compare the two variants of the First Representation Theorem, Theorem \ref{1repgeneral} for non-negative operators and \cite[Theorem 2.3]{GKMV} for strictly positive operators, respectively.

\begin{remark}\label{noExtension}
Theorem \ref{1repgeneral} is a supplement to \cite[Theorem 2.3]{GKMV} in the sense that new pairs of operators \((A,H)\) can be treated, where \(A\) is allowed to be a non-negative operator. 

Theorem \ref{1repgeneral} is not an extension of \cite[Theorem 2.3]{GKMV} since there are cases of strictly positive operators \(A\) that cannot be covered by Theorem \ref{1repgeneral} but can be treated by \cite[Theorem 2.3]{GKMV}. A suitable  \(2\times 2\) matrix example for this is given by \[A:=\begin{pmatrix}2&0\\0&1/2\end{pmatrix},\quad H:=\begin{pmatrix}0&1\\1&0\end{pmatrix}.\] In this case, any self-adjoint involution \(J_A\) commuting with \(A\) is diagonal too.
So up to the choice of a sign, we would have  \(J_A=\pm\begin{pmatrix}1&0\\0&-1\end{pmatrix}\). Since \(P_AHP_A=0\) in this case the condition \eqref{createsgap} in Hypothesis \ref{assumption1} cannot be satisfied for the pair \((A,H)\).
\end{remark}

In the following, we provide an example where the operator \(B=A^{1/2}HA^{1/2}\) associated with the form \(\fb\) is only essentially self-adjoint if \(\min \sigma(A)=0\). Thus, condition \eqref{createsgap} ensures the closedness of the symmetric operator \(B=A^{1/2}HA^{1/2}\) if \(\min \sigma(A)=0\).
 
\begin{example}\label{counterexample}
Let \(\ell^{2,p}\) be the space of complex sequences \((a_k)_{k\in \mathbbm{N}}\), such that \[\sum_{k=1}^\infty k^p|a_k|^2<\infty.\] We abbreviate the underlying Hilbert space by \(\cH := \ell^2\oplus \ell^2\), where \(\ell^2:=\ell^{2,0}\).

On the Hilbert space \(\cH\), we define the self-adjoint operators \(A\) and \(H\) by \[A:=\bigoplus_{k\in \mathbbm{N}}\begin{pmatrix}k+1&0\\0& (k+1)^{-1}\end{pmatrix},\quad H:=\bigoplus_{k\in\mathbbm{N}}\begin{pmatrix}0&1\\1& 0\end{pmatrix}\] with \(\dom(H)=\ell^2\oplus \ell^2\) and 
\(\dom(A)=\ell^{2,2}\oplus \ell^{2}\subset \cH\). 
Here, the operators \(A\) and \(H\) are self-adjoint, \(\min \sigma(A)=0\) and \(H=H^{-1}\) is bounded.

Hypothesis \eqref{createsgap} is not satisfied since there is no suitable involution \(J_A\) commuting with \(A\). Indeed, assume that such a \(J_A\) exists, then, since \(A\) has a simple spectrum, \(J_A\) is a function of \(A\), see \cite[Proposition VIII.3.6]{DL}. By the diagonal block structure of \(A\), the operators \(J_A\) and \(P_A\) also must be block diagonal. 
Since each block of \(A\) itself is diagonal, the corresponding blocks of \(P_A\) are also diagonal.
Considering the block for \(k=1\), Remark \ref{noExtension} shows that it is not possible to find such a projector \(P_A\) since not even its first block can be constructed.

To see that \(A^{1/2}HA^{1/2}\) is not self-adjoint, let \(A_k\) and \(H_k\) denote the k-th block of \(A\) and \(H\), respectively. Then, we have 

\[A_k^{1/2}H_kA_k^{1/2}=H_k.\] In this sense, the symmetric operator  \(A^{1/2}HA^{1/2}\) is associated with the form \(\fb\) but is not closed on the natural domain \[\{x\in \dom(A^{1/2})\;|\;HA^{1/2}x\in \dom(A^{1/2})\}\subseteq\ell^{2,1}\oplus\ell^{2} \subset \ell^2\oplus \ell^2=\dom(H).\]
The closure of this operator is self-adjoint, so that the operator is only essentially self-adjoint.

\end{example}
The phenomenon appearing in the example above can be explained in the following way.
The operator \(A\) has arbitrarily large and arbitrarily small spectral parts. The operator \(H\) maps the large spectral parts to the small ones and vice versa in such a way that the product \(A^{1/2}HA^{1/2}\) remains bounded on its natural domain. 
The product is  not closed then. 

If \(A\) is strictly positive as in \cite{GKMV}, the large spectral parts have no counterpart to be mapped to. Therefore, the closedness of the product \(A^{1/2}HA^{1/2}\) on the natural domain is preserved. 

This distinguishes the case of strictly positive \(A\), where \(B\) is automatically self-adjoint, from the case of non-negative \(A\), where additional conditions have to be imposed. 

\subsection{The off-diagonal case}

In a similar way as in Theorem \ref{1repgeneral}, we consider forms \(\fb\) defined by an indefinite diagonal form with an off-diagonal additive form perturbation.

Let \(\cH\) be a Hilbert space and let \(\fa\) be a non-negative, closed sesquilinear form. By the First Representation Theorem for non-negative forms \cite[Theorem VI.2.1]{K}, the form \(\fa\) is associated with a non-negative self-adjoint operator \(A\). By \cite[Theorem VI.2.23]{K}, the form \(\fa\) is even represented by this operator. In this situation, we impose the following assumptions:

\begin{hypothesis}\label{assumption2}
Let \(J_A\) be a self-adjoint involution commuting with \(A\) and let 

\begin{equation*}\cH=\cH_+\oplus \cH_-,\quad\cH_\pm:=\ran(I\pm J_A)\end{equation*} be the orthogonal decomposition induced by \(J_A\). Suppose that \(\fv\) is a symmetric sesquilinear form on \[\dom[\fv]\supseteq\dom[\fa]=\dom(A^{1/2}),\]and assume that \(\fv\) is \((\fa+I)\)-bounded, which means that there exists a finite constant \(\beta\) with
\begin{equation*}\label{eq:offdiag1}
	|\fv[x]|\leq \beta\left\|(A+I)^{1/2}x\right\|^2=\beta(\fa+I)[x],\; x\in \dom(A^{1/2}).
\end{equation*} 
Suppose furthermore that \(\fv\) is off-diagonal with respect to the decomposition induced by \(J_A\), that is, 

\begin{equation*}\label{eq:offdiag2}
	\fv[J_Ax,y]=-\fv[x,J_Ay]\quad\text{for all }x,y\in \dom[\fa].\end{equation*}
\end{hypothesis}
The forms \(\fv\) satisfying  Hypothesis \ref{assumption2} have a special structure with respect to the operator \((A+I)^{1/2}\).

\begin{remark}\label{operatorV}
Let \(\fv\) be a form satisfying Hypothesis \ref{assumption2}. Then \(\fv\) can  explicitly be rewritten on \(\dom[\fa]\) as  

\begin{equation*}
\fv[x,y]=\langle S(A+I)^{1/2}x, (A+I)^{1/2}y\rangle,\quad x,\;y\in \dom[\fa]=\dom(A^{1/2}),
\end{equation*}
where \(S\) is a bounded operator with \(\left\|S\right\|\leq \beta\), see \cite[Lemma VI.3.1]{K}. 
Since the form \(\fv\) is  off-diagonal with respect to the decomposition induced by \(J_A\), we have that \[S=\begin{pmatrix}0&T\\ T^\ast&0\end{pmatrix},\; T:=P_ASP_A^\bot\colon \cH_-\to \cH_+\] is off-diagonal with respect to \(\cH=\cH_+\oplus \cH_-\).    
\end{remark}
We are now ready to formulate the First Representation Theorem in this off-diagonal setting extending \cite[Theorem 2.5]{GKMV} to the case of \(\min\sigma(A)=0\).

\begin{theorem}[The First Representation Theorem in the off-diagonal case] \label{1repoffdiag}

\hspace{1pt}

Assume Hypothesis \ref{assumption2} and let \(\fb\) be the symmetric sesquilinear form on\\ \(\dom[\fb]=\dom[\fa]\) given by

\begin{equation*}
	\fb[x,y]:=\fa[x,J_Ay]+\fv[x,y].
\end{equation*}
Then, there exists a unique self-adjoint operator \(B\) on \(\dom(B)\subseteq \dom[\fb]\) such that 
\begin{equation*}
	\fb[x,y]=\langle x,By\rangle \quad\text{for all}\;x\in \dom[\fa],\; y\in \dom(B).
\end{equation*}
Furthermore \(\dom(B)\) is a \emph{form core} for \(\fa\), that is, \(\dom(B)\) is dense in \(\dom[\fa]\) with respect to the norm \(\sqrt{(\fa+I)[\;\cdot\;]}\).

\begin{proof}

Consider the perturbed form \(\tilde{\fb}\) on \(\dom[\tilde\fb]=\dom[\fb]\) given by 
\[\tilde{\fb}[x,y]:=\fb[x,y]+\langle x,J_Ay\rangle=(\fa+I)[x,J_Ay]+\fv[x,y],\]
and let \(\fh\) be the bounded form given by \[\fh[x,y]:=\tilde{\fb}[(A+I)^{-1/2}x,(A+I)^{-1/2}y].\] Then, by Remark \ref{operatorV}, the form \(\fh\) corresponds to the bounded block operator \begin{equation}\label{tildeH}\widetilde{H}:=J_A+S=\begin{pmatrix}I_{\cH_+}&T\\T^*&-I_{\cH_-}\end{pmatrix},\end{equation} where \(T=P_ASP_A^\bot\colon \cH_-\to \cH_+\). By \cite[Remark 2.8]{KMM} the operator \(\widetilde{H}\) is boundedly invertible.
We compute that
\begin{equation}\label{tildeB}\tilde{\fb}[x,y]=\fh[(A+I)^{1/2}x,(A+I)^{1/2}y]=\langle (A+I)^{1/2}x,\widetilde{H}(A+I)^{1/2}y\rangle,\;x,y\in \dom[\tilde\fb],\end{equation}
so that by \cite[Theorem 2.3]{GKMV} the form \(\tilde\fb\) is associated with a unique self-adjoint operator \(\widetilde{B}\) satisfying \(\dom(\widetilde{B})\subseteq \dom[\fb]\). Thus, we have that
\[\tilde{\fb}[x,y]=\langle x,\widetilde{B}y\rangle,\quad\text{for all}\;x\in \dom[\fa], \;y\in \dom(\widetilde{B}).\]
The self-adjoint operator associated with the form \(\fb\) is then  \(B:=\widetilde{B}-J_A\). The core property is a direct consequence of the corresponding core property in Theorem \ref{1repgeneral}.
\end{proof}
\end{theorem}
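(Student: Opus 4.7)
The natural strategy is to reduce to the strictly positive case by absorbing the non-strictly-positive diagonal part into a strictly positive one via a bounded shift. Since $\fa$ is closed and non-negative with associated operator $A$, the operator $A+I$ is strictly positive and $\dom((A+I)^{1/2})=\dom(A^{1/2})=\dom[\fa]$. I would therefore pass to the perturbed form $\tilde\fb:=\fb+\langle\,\cdot\,,J_A\,\cdot\,\rangle$ on $\dom[\fb]$, which differs from $\fb$ only by a bounded symmetric form and converts the diagonal part $\fa[x,J_Ay]$ into $(\fa+I)[x,J_Ay]$, now arising from the strictly positive operator $A+I$.

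First I would rewrite $\tilde\fb$ in the shape $\langle(A+I)^{1/2}x,\widetilde H(A+I)^{1/2}y\rangle$ for a bounded self-adjoint operator $\widetilde H$. The diagonal piece clearly contributes $J_A$. For the perturbation $\fv$, the $(\fa+I)$-boundedness allows an application of \cite[Lemma VI.3.1]{K} to produce a bounded self-adjoint operator $S$ with $\|S\|\leq\beta$ and $\fv[x,y]=\langle S(A+I)^{1/2}x,(A+I)^{1/2}y\rangle$. The off-diagonal identity $\fv[J_Ax,y]=-\fv[x,J_Ay]$, together with the fact that $J_A$ commutes with $(A+I)^{1/2}$, then forces $SJ_A=-J_AS$, so $S$ is off-diagonal with respect to $\cH=\cH_+\oplus\cH_-$. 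Consequently
\begin{equation*}
\widetilde H=J_A+S=\begin{pmatrix}I_{\cH_+}&T\\T^*&-I_{\cH_-}\end{pmatrix}
\end{equation*}
for some bounded $T\colon\cH_-\to\cH_+$; the $\pm I$ diagonal blocks together with \cite[Remark 2.8]{KMM} render $\widetilde H$ boundedly invertible.

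Having cast $\tilde\fb$ into the framework of the strictly positive First Representation Theorem \cite[Theorem 2.3]{GKMV} applied to the pair $(A+I,\widetilde H)$, I would invoke that theorem to obtain a unique self-adjoint operator $\widetilde B=(A+I)^{1/2}\widetilde H(A+I)^{1/2}$ representing $\tilde\fb$, with $\dom(\widetilde B)$ a core for $(A+I)^{1/2}$. Setting $B:=\widetilde B-J_A$ on $\dom(B):=\dom(\widetilde B)$ then yields a self-adjoint operator associated with $\fb$, because $J_A$ is a bounded self-adjoint perturbation. Uniqueness transfers back: any competing self-adjoint $B'$ associated with $\fb$ gives a self-adjoint $B'+J_A$ associated with $\tilde\fb$, which must coincide with $\widetilde B$ by uniqueness in \cite[Theorem 2.3]{GKMV}.

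For the form-core property, $\dom(B)=\dom(\widetilde B)$ is a core for $(A+I)^{1/2}$, hence dense in $\dom((A+I)^{1/2})=\dom[\fa]$ for the graph norm of $(A+I)^{1/2}$, which is precisely the form norm $\sqrt{(\fa+I)[\,\cdot\,]}$. The only step demanding genuine attention is the extraction of the off-diagonal block structure of $S$ from its anticommutation with $J_A$; everything else is a routine transcription of the strictly positive case through the bounded shift by $J_A$.
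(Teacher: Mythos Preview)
Your proposal is correct and follows essentially the same route as the paper: shift by the bounded form $\langle\cdot,J_A\cdot\rangle$ to obtain $\tilde\fb$, express it as $\langle(A+I)^{1/2}\cdot,\widetilde H(A+I)^{1/2}\cdot\rangle$ with $\widetilde H=J_A+S$ off-diagonal and boundedly invertible, apply \cite[Theorem 2.3]{GKMV}, and then subtract $J_A$. Your derivation of the anticommutation $SJ_A=-J_AS$ and the explicit uniqueness transfer are exactly the content the paper packages into Remark~\ref{operatorV} and the reference to Theorem~\ref{1repgeneral}, so there is no substantive difference.
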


\begin{remark}\label{def:B+JA}
The operators \(\widetilde{H}\)\label{p:H} and \(\widetilde{B}=B+J_A\)\label{p:B} associated with \(\tilde\fb=\fb+J_A\), appearing in the proofs of the First Representation Theorem in the general case (Theorem \ref{1repgeneral}) and in the off-diagonal case (Theorem \ref{1repoffdiag}) are boundedly invertible, see \cite[Remark 2.8]{KMM} and \cite[Theorem 2.3]{GKMV}. 
More concretely, we even have the estimate \((-c,c)\subset  \rho(B+J_A)\), where \(c:=\|\widetilde{H}^{-1}\|^{-1}\) is a lower estimate on the spectral gap of \(\widetilde{H}\). In the general case, we have \(c\leq \alpha\leq 1\) and \(c\leq 1\) in the off-diagonal case.

Additionally, \(\widetilde{H}\) is bounded  and \(\widetilde{B}\) can be represented as
\[\widetilde{B}=(A+I)^{1/2}\widetilde{H}(A+I)^{1/2}.\] \end{remark}

In contrast to the general case, Theorem \ref{1repoffdiag} is an extension to the corresponding Theorem \cite[Theorem 2.5]{GKMV} (cf.~ Remark \ref{noExtension} for the general case). This is contained in the following remark.
 
\begin{remark}\label{equivbounds}
Theorem \ref{1repoffdiag} is a generalisation of \cite[Theorem 2.5]{GKMV}. 

Indeed, for strictly positive \(\fa\geq c>0\), the two sided estimate 
\[\fa[x]\leq (\fa+I)[x]=\fa[x]+c^{-1}c\left\|x\right\|^2\leq \fa[x]+c^{-1}\fa[x]=(1+c^{-1})\fa[x]\]
implies the equivalence between \(\fa\)-boundedness and \((\fa+I)\)-boundedness in this case.
This yields that forms satisfying the requirements of Theorem \ref{1repoffdiag} also satisfy those of \cite[Theorem 2.5]{GKMV}. 
\end{remark}

Note that the First Representation Theorem in the off-diagonal case is already contained in \cite{Ne} by Nenciu. That work however does not consider the Second Representation Theorem.

\begin{remark}\label{Nenciu}
The First Representation Theorem \ref{1repoffdiag} we give here is a special case of \cite[Theorem 2.1]{Ne}. To see this, consider \(U:=J_A\) as the unitary part of the polar decomposition of the self-adjoint operator \(J_AA\). We then set \[\fh_{A_1}[x,y]:=\langle |J_AA|^{1/2}x,J_A|J_AA|^{1/2}y\rangle+1\langle x, J_Ay \rangle\] in equation (2.5) of \cite{Ne}. 
The off-diagonal form \(\fv\) is, by \cite[Definition 2.1]{Ne}, then a form perturbation of \(J_AA\). Indeed, the first two conditions in \cite[Definition 2.1]{Ne} can be seen directly, namely \[\dom[\fv]\supseteq\dom(A^{1/2}) =\dom(|J_AA|^{1/2})\] and \[|\fv[x,y]|\leq \beta \|(A+I)^{1/2}x\|\cdot \|(A+I)^{1/2}y\|.\] It remains to note that the operator \(\widetilde{H}=J_A+S\) is boundedly invertible by the assumption that \(\fv\) is off-diagonal. Since we can translate \(J_A\equiv T,\;S\equiv V_1\) into the notation of \cite{Ne}, the sum \(T+V_1\) has a bounded inverse and thus also the last condition in \cite[Theorem 2.1]{Ne} is satisfied.      
\end{remark}

We now compare the general case in Theorems \ref{1repgeneral} to the off-diagonal case in Theorem \ref{1repoffdiag}.
\begin{remark}
\begin{enumerate}
\item
The off-diagonal case in Theorem \ref{1repoffdiag} is a special case of Theorem \ref{1repgeneral}, where the Hypothesis \eqref{createsgap} is automatically satisfied. In this case, the involution \(J_A\) creating the spectral gap is already given by the diagonal structure of the form \(\fa\).
Indeed, in this situation, equations \eqref{tildeB} and \eqref{tildeH} imply that the perturbed form \(\fb+J_A\) and thus also \(\fb\) satisfy the First Representation Theorem \ref{1repoffdiag}. In the general case however, finding a suitable perturbation \(J_A\) may be difficult or not possible at all as Example \ref{counterexample} illustrates.
\item 
The difference between the results of Theorems \ref{1repgeneral} and \ref{1repoffdiag} lies in the representation of the operator \(B\) associated to the form \(\fb\). In the general case, we have the product formula \[B=A^{1/2}HA^{1/2}\] involving only the operators \(A\) and \(H\) defining the form \(\fb\). 
As a consequence, an explicit representation \[\Ker(B)=\{x\in \dom(A^{1/2})\;|\; HA^{1/2}x\in \Ker(A^{1/2})\}\] can be directly deduced.

In the off-diagonal case, if the form \(\fv\) is only \((\fa+I)\)-bounded but not bounded with respect to the form \(\fa\), a corresponding operator \(H\) seems to be artificial.

The best representation we have in this situation is 
 \[B=(A+I)^{1/2}\widetilde{H}(A+I)^{1/2}-J_A.\]

Indeed, the operator \(B\) cannot be written as a product with respect to \(A^{1/2}\) and a bounded operator \(H\) like in the first case, since in this case the operator \(H\) would formally be given by the block operator matrix
\[\begin{pmatrix}I_{\cH_+}& A_+^{-\frac{1}{2}}(A_++I)^{\frac{1}{2}}T(A_-+I)^{\frac{1}{2}}A_-^{-\frac{1}{2}}\\A_-^{-\frac{1}{2}}(A_-+I)^{\frac{1}{2}}T^\ast(A_++I)^{\frac{1}{2}}A_+^{-\frac{1}{2}}& -I_{\cH_-}\end{pmatrix}.\] 
If \(\min \sigma(A_\pm)=0\), the off-diagonal entries of this matrix are either unbounded or may not exist at all if \(A_\pm\) has a non-trivial kernel. So if such an operator \(H\) exists, it would in general be unbounded. In this case, an explicit representation of the kernel is more difficult to obtain. This will be carried out in Theorem \ref{thm:incl} below.

For a strictly positive form \(\fa\), respectively operator \(A\), however, the off-diagonal part \(\fv\) is even \(\fa\)-bounded by Remark \ref{equivbounds}. Thus \(B=A^{1/2}HA^{1/2}\) is still valid by direct application of \cite[Theorem 2.5 and Lemma 2.2]{GKMV} with the operator \[H=\begin{pmatrix}I_{\cH_+}&T\\T^\ast&-I_{\cH_-}\end{pmatrix}.\] 
In this case, the kernel can be represented as in the general case,
\[\Ker(B)=\{x\in \dom(A^{1/2})\;|\; HA^{1/2}x\in \Ker(A^{1/2})\}.\]
\end{enumerate}
\end{remark}

\subsection{Representation of the kernel in the off-diagonal case}

For strictly positive \(\fa\), the operator \(B\) in \cite[Theorem 2.5]{GKMV} associated with the form \(\fb\) defined by \[\fb[x,y]:=\fa[x,J_Ay]+\fv[x,y]\]  is boundedly invertible. In the situation where \(A\) is only non-negative, the operator \(B\) may have a non-trivial kernel. 
In the following, we give a description for the kernel in the off-diagonal case of Theorem \ref{1repoffdiag}. In the authors Ph. D. thesis \cite{ste}, this description is used explicitly in the case of the Stokes operator on unbounded domains.

Recall that the operators  \(J_A\) and \(P_A\) commute with \(A\) by assumption and the decomposition \(\cH=\cH_+\oplus \cH_-\) reduces \(A\) (see \cite[Satz 2.60]{Wei}), so that with respect to this decomposition, one has \(A=A_+\oplus A_-\) on \(\dom(A)=\dom(A_+)\oplus \dom(A_-)\) with self-adjoint operators \(A_\pm\).  

\begin{definition}\label{defker} 
Assume Hypothesis \ref{assumption2}. 
We set for brevity \begin{equation}\label{p:Lpm}
\fL_\pm := \big\{x_\pm\in\dom(A_\pm^{1/2})\,|\,\fv[x_+\oplus 0,0\oplus x_-]=0\quad \text{for all}\; x_\mp\in\dom(A_\mp^{1/2})\big\}.
\end{equation}
\end{definition}

Note that \(\fL_+\oplus \{0\}\) and \(\{0\}\oplus\fL_-\) are not necessarily subsets of \(\dom(B)\) or closed. This fact has to be taken into account for the computation of the kernel of the associated operator \(B\).

We are now ready to give a representation for the kernel of \(B\) with respect to the kernels of the components \(A_\pm\).
This is a generalisation of \cite[Theorem 2.2]{KMM} to the case of unbounded operators respectively forms.
\begin{theorem}\label{thm:incl}
Let \(B\) be the operator associated with the form \(\fb\) in  Theorem \ref{1repoffdiag}. 
Then we have that
\begin{equation*}\Ker(B) = \left(\Ker(A_+) \cap \fL_+\right) \oplus \left( \Ker(A_-) \cap \fL_-\right).
\end{equation*}

\begin{proof}
Suppose first that \(x=x_+\oplus x_-\in\Ker(B)\subseteq \dom(A^{1/2})\) with respect to \(\cH=\cH_+\oplus \cH_-\). By the First Representation Theorem \ref{1repoffdiag}, it follows that
 \[0=\langle y, Bx\rangle=\fb[y,x]=\fa[y,J_Ax]+\fv[y,x]\text{ for all }y\in\dom(A^{1/2}).\] 
Writing \(y=y_+\oplus y_-\) with \(y_\pm\in \dom(A^{1/2}_\pm)\), the Second Representation Theorem for the non-negative form \(\fa\) (see \cite[Theorem VI.2.23]{K}) yields that
\begin{equation*}
\langle A_+^{1/2}y_+, A_+^{1/2}x_+\rangle_{\cH_+} - \langle A_-^{1/2}y_-, A_-^{1/2}x_-\rangle_{\cH_-} +\fv[y_+\oplus 0,0\oplus x_-]+\fv[0\oplus y_-,x_+\oplus 0]=0.
\end{equation*}
Choosing \(y_-=0\), respectively \(y_+=0\), we arrive at
\begin{equation}\label{eq:2}
\langle A_+^{1/2}y_+, A_+^{1/2}x_+\rangle_{\cH_+} +\fv[y_+\oplus 0,0\oplus x_-]=0
\end{equation}
and
\begin{equation}\label{eq:3}
- \langle A_-^{1/2}y_-, A_-^{1/2}x_-\rangle_{\cH_-} +\fv[0\oplus y_-,x_+\oplus 0]=0,
\end{equation}
respectively. In particular, if \(y_+=x_+\) and \(y_-=x_-\), we have that
\begin{equation}\label{eq:4}
\big\| A_+^{1/2}x_+\big\|_{\cH_+}^2 +\fv[x_+\oplus 0,0\oplus x_-]=0
\end{equation}
and
\begin{equation}\label{eq:5}
- \big\| A_-^{1/2}x_-\big\|_{\cH_-}^2 +\overline{\fv[x_+\oplus 0,0\oplus x_-]}=0,
\end{equation}
Suppose that $x_+\notin\Ker(A_+)=\Ker(A_+^{1/2})$. Then, from \eqref{eq:4} we get that \[\fv[x_+\oplus 0,0\oplus x_-]<0\] and from \eqref{eq:5} follows that \[\fv[x_+\oplus 0 ,0\oplus x_-]=\overline{\fv[x_+\oplus 0,0\oplus x_-]}\geq 0,\] which yields a contradiction. Thus, \(x_+\in\Ker(A_+)\). 

Using  equation \eqref{eq:2} again, we obtain that \[\fv[y_+\oplus 0,0\oplus x_-]=0\quad\text{ for all }y_+\in\dom(A_+^{1/2})\] and, hence, \(x_-\in\fL_-\).
Similarly, one proves that \(x_-\in\Ker(A_-)\) and \(x_+\in\fL_+\). This proves the inclusion 
\[\Ker(B) \subseteq \left(\Ker(A_+) \cap \fL_+\right) \oplus \left(\Ker(A_-) \cap \fL_-\right).\]

We now turn to the converse inclusion.
By the \((\fa+I)\)-boundedness of \(\fv\) in Hypothesis \ref{assumption2}, the auxiliary form
\begin{equation*}
\ft[x_+,y_-]:= \fv[(A_++I_{\cH_+})^{-1/2}x_+\oplus 0,0\oplus (A_-+I_{\cH_-})^{-1/2}y_-],\quad x_+\in \cH_+,\;y_-\in\cH_-
\end{equation*}
is bounded.
Hence, there exists a bounded operator \(T:\cH_-\to\cH_+\) such that
\begin{equation*}
\ft[x_+,y_-]=\langle x_+, Ty_-\rangle_{\cH_+}.
\end{equation*}
Noticing \(\dom(A_\pm^{1/2})=\ran\big((A_\pm+I_{\cH_\pm})^{-1/2}\big)\), we get that
\begin{equation}\label{L+}\fL_+=\{(A_++I_{\cH_+})^{-1/2}x\, |\, x\in\Ker(T^\ast) \} = (A_++I_{\cH_+})^{-1/2} \Ker(T^\ast),
\end{equation}
In the same way we obtain that \begin{equation}\label{L-}\fL_-= (A_-+I_{\cH_-})^{-1/2} \Ker(T).\end{equation}
Let 
$x_+\in \Ker(A_+) \cap \fL_+$ and $x_-\in \Ker(A_-) \cap \fL_-$. Then, by \eqref{L+} and \eqref{L-}, there exist $u_+\in\Ker(T^\ast)$ and $u_-\in\Ker(T)$
such that 
\begin{equation*}
x_+=(A_++I_{\cH_+})^{-1/2} u_+\quad\text{and}\quad x_-=(A_++I_{\cH_+})^{-1/2} u_-.
\end{equation*}
Obviously, from $x_+\in\Ker(A_+)\subset\dom(A_+)$, it follows that $u_+\in\dom(A_+^{1/2})$. 

Similarly, we have $u_-\in\dom(A_-^{1/2})$.

We claim that $u_+\in\Ker(A_+)$ and $u_-\in\Ker(A_-)$. Indeed, we have that 
\begin{equation*}
x_+ = (A_++I_{\cH_+})x_+ = (A_++I_{\cH_+})^{1/2} u_+,
\end{equation*}
which implies that $u_+=(A_++I_{\cH_+})^{-1/2}x_+$ and, thus, $u_+\in\dom(A_+^{3/2})$. Hence, we arrive at the conclusion that
\begin{equation*}
A_+ u_+= (A_++I_{\cH_+})^{-1/2}A_+x_+=0,
\end{equation*}
which proves that $u_+\in\Ker(A_+)$. In the same way we also have $u_-\in\Ker(A_-)$.
By Remark \ref{def:B+JA} and equation \eqref{tildeH} we get the following representation:
\begin{equation}\label{eq:B}
B=(A+I)^{1/2} \widehat{H}(A+I)^{1/2}
\end{equation}
with the operator 
\begin{equation}\label{B:repr}
\widehat{H} :=\begin{pmatrix} I_{\cH_+}-(A_++I_{\cH_+})^{-1} & T \\ T^\ast & -I_{\cH_-}+(A_-+I_{\cH_-})^{-1}\end{pmatrix}.
\end{equation}
This representation  of \(B\) follows from 
\begin{equation*}
B=(A+I)^{1/2} \widetilde{H}(A+I)^{1/2}-J_A
\end{equation*}
together with
\begin{equation*}
\widetilde{H} = \begin{pmatrix} I_{\cH_+} & T \\ T^\ast & -I_{\cH_-}\end{pmatrix}.
\end{equation*} Identifying \(x=x_+\oplus x_-\) with the vector \(\begin{pmatrix}x_+\\x_-\end{pmatrix}\), we compute
\begin{equation*}
\begin{split}&
\widehat{H}(A+I)^{1/2}\begin{pmatrix} x_+ \\ x_-\end{pmatrix}\\ &= \begin{pmatrix} (A_++I_{\cH_+})^{1/2} x_+ - (A_++I_{\cH_-})^{-1/2} x_+ + T(A_-+I_{\cH_-})^{1/2} x_- \\ T^\ast(A_++I_{\cH_+})^{1/2} x_+ - (A_-+I_{\cH_-})^{1/2} x_- + (A_-+I_{\cH_-})^{-1/2} x_- \end{pmatrix}\\
&= \begin{pmatrix} u_+ - (A_++I_{\cH_+})^{-1}u_+ + T u_- \\ T^\ast u_+ -u_- + (A_-+I_{\cH_-})^{-1}u_- \end{pmatrix}\\ &=\begin{pmatrix}A_+(A_++I_{\cH_+})^{-1}u_+\\ -A_-(A_-+I_{\cH_-})^{-1}u_-\end{pmatrix}  =\begin{pmatrix}(A_++I_{\cH_+})^{-1}A_+u_+\\ -(A_-+I_{\cH_-})^{-1}A_-u_-\end{pmatrix}=0.
\end{split}
\end{equation*}
{}From  the representation \eqref{B:repr}, it follows that \(x\in \Ker(B)\) which completes the proof.
\end{proof}
\end{theorem}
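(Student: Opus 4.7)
The plan is to establish the two set inclusions separately. Both rest on the equivalence $x \in \Ker(B) \Leftrightarrow \fb[y,x] = 0 \text{ for all } y \in \dom[\fa]$ coming from Theorem \ref{1repoffdiag}, together with two algebraic simplifications of the right-hand side based on the block decomposition $\cH = \cH_+ \oplus \cH_-$. First, the Second Representation Theorem for the non-negative form $\fa$ (\cite[Theorem VI.2.23]{K}) combined with $J_A(x_+ \oplus x_-) = x_+ \oplus (-x_-)$ yields
\[
\fa[y, J_A x] = \langle A_+^{1/2} y_+, A_+^{1/2} x_+\rangle_{\cH_+} - \langle A_-^{1/2} y_-, A_-^{1/2} x_-\rangle_{\cH_-}.
\]
Second, the off-diagonality hypothesis $\fv[J_A\cdot, \cdot] = -\fv[\cdot, J_A\cdot]$ forces $\fv$ to vanish on any pair of vectors lying in the same subspace $\cH_+$ or $\cH_-$, so that only the cross terms survive:
\[
\fv[y, x] = \fv[y_+\oplus 0, 0 \oplus x_-] + \fv[0 \oplus y_-, x_+\oplus 0].
\]

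For the inclusion $\Ker(B) \subseteq (\Ker(A_+) \cap \fL_+) \oplus (\Ker(A_-) \cap \fL_-)$, take $x \in \Ker(B)$. Specializing the identity $\fb[y, x] = 0$ to $y_- = 0$ and then to $y_+ = 0$ decouples it into two independent statements. Substituting $y_\pm = x_\pm$ in each and invoking the Hermitian symmetry $\fv[0 \oplus x_-, x_+ \oplus 0] = \overline{\fv[x_+\oplus 0, 0 \oplus x_-]}$ produces
\[
\|A_+^{1/2} x_+\|^2 = -\fv[x_+ \oplus 0, 0 \oplus x_-], \qquad \|A_-^{1/2} x_-\|^2 = \overline{\fv[x_+ \oplus 0, 0 \oplus x_-]}.
\]
The left-hand sides are nonnegative reals while the right-hand sides are, up to sign and conjugation, the same complex number; hence both must vanish. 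This forces $x_\pm \in \Ker(A_\pm^{1/2}) = \Ker(A_\pm)$, and reading the decoupled identities once more then gives $\fv[y_+\oplus 0, 0 \oplus x_-] = 0 = \fv[0 \oplus y_-, x_+\oplus 0]$ for every $y_\pm \in \dom(A_\pm^{1/2})$, i.e., $x_\pm \in \fL_\pm$.

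For the reverse inclusion, let $x_\pm \in \Ker(A_\pm) \cap \fL_\pm$ and put $x = x_+ \oplus x_-$. The identities $A_\pm^{1/2} x_\pm = 0$ kill the $\fa$-part, and the $\fv$-part equals $\fv[y_+\oplus 0, 0 \oplus x_-] + \overline{\fv[x_+\oplus 0, 0 \oplus y_-]}$, which vanishes summand-wise by $x_\pm \in \fL_\pm$. Therefore $\fb[\,\cdot\,, x] \equiv 0$ on $\dom[\fa]$, which in turn implies $\tilde{\fb}[\,\cdot\,, x] = \langle\,\cdot\,, J_A x\rangle$ for the shifted form $\tilde{\fb} = \fb + J_A$. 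Since this last functional is bounded on $\cH$, the standard characterization of the domain of the operator $\widetilde{B} = B + J_A$ associated with $\tilde{\fb}$ (constructed in Theorem \ref{1repoffdiag} via \cite[Theorem 2.3]{GKMV}) yields $x \in \dom(\widetilde{B}) = \dom(B)$ and $\widetilde{B} x = J_A x$, and hence $Bx = \widetilde{B} x - J_A x = 0$.

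The most delicate point will be the off-diagonality simplification and the pairing of the Hermitian symmetry of $\fv$ with the sign flip introduced by $J_A$ in $\fa[y, J_A x]$; it is precisely this interplay that forces the common value $\fv[x_+\oplus 0, 0 \oplus x_-]$ to be simultaneously non-positive and non-negative. Once this algebraic skeleton is in place, both inclusions follow from direct substitution in the form identity $\fb[y, x] = 0$, and, unlike in the author's own proof, no recourse to the operator-level representation $B = (A+I)^{1/2} \widehat{H} (A+I)^{1/2}$ is needed.
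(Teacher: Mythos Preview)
Your proof is correct. The forward inclusion is argued exactly as in the paper: decouple $\fb[y,x]=0$ by setting $y_-=0$ and $y_+=0$, then specialise to $y_\pm=x_\pm$ and use the sign/conjugate interplay to force $\|A_\pm^{1/2}x_\pm\|=0$ and $x_\pm\in\fL_\pm$.

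For the reverse inclusion your route is genuinely different and more economical. The paper introduces the bounded operator $T$ representing the off-diagonal part of $\fv$, rewrites $\fL_\pm=(A_\pm+I)^{-1/2}\Ker(T^{\ast}),\,(A_\pm+I)^{-1/2}\Ker(T)$, and then verifies $x\in\Ker(B)$ by an explicit block-matrix computation of $\widehat H(A+I)^{1/2}x$ using the representation $B=(A+I)^{1/2}\widehat H(A+I)^{1/2}$. You bypass all of this: you check $\fb[\,\cdot\,,x]\equiv 0$ directly from the definitions of $\Ker(A_\pm)$ and $\fL_\pm$, shift to $\tilde\fb[\,\cdot\,,x]=\langle\,\cdot\,,J_Ax\rangle$, and invoke the natural-domain description of $\widetilde B=(A+I)^{1/2}\widetilde H(A+I)^{1/2}$ (boundedness of $y\mapsto\tilde\fb[y,x]$ on $\cH$ is equivalent to $\widetilde H(A+I)^{1/2}x\in\dom((A+I)^{1/2})$, since $(A+I)^{1/2}$ is self-adjoint) to conclude $x\in\dom(\widetilde B)=\dom(B)$ with $\widetilde Bx=J_Ax$. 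This is shorter and stays entirely at the form level. The paper's approach, on the other hand, produces the by-product $\fL_\pm=(A_\pm+I)^{-1/2}\Ker(T^{(\ast)})$, a structural description of the $\fL_\pm$ that may be useful in applications but is not needed for the kernel identity itself.
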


\section{The Second Representation Theorem}\label{sec2rep}
In this section, we consider the Second Representation Theorem simultaneously in the situations where either Hypothesis \ref{assumption1} or \ref{assumption2} is satisfied.

These situations can be treated simultaneously since the operator \(B\) associated with the form \(\fb\) can be represented in the same way, see Remark \ref{def:B+JA}. Namely  
\[B=(A+I)^{1/2}\widetilde{H}(A+I)^{1/2}-J_A\] 
holds in both situations. We define the sign function by choosing \(\sign(0):=0\).

\begin{theorem}[The Second Representation Theorem]\label{2rep}

Let \(\fb\) be given as in Theorem \ref{1repgeneral} or Theorem \ref{1repoffdiag}, and  let \(B\) be the associated operator. Furthermore, suppose that 

\begin{equation}\label{eq:dB=dA}
\dom(|B|^{1/2})=\dom(A^{1/2}).
\end{equation}
Then, the operator \(B\) \emph{represents} the form \(\fb\), that is, 

\begin{equation}\fb[x,y]=\langle |B|^{1/2}x,\sign(B)|B|^{1/2}y\rangle \quad \text{for all}\; x,y \in \dom[\fb]=\dom(|B|^{1/2})\end{equation} 
holds.

\end{theorem}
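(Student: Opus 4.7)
Following the reduction scheme advertised in the introduction, I pass to the perturbed form \(\widetilde{\fb} := \fb + \langle\,\cdot\,,J_A\,\cdot\,\rangle\), whose associated operator is \(\widetilde{B} := B + J_A\). By Remark \ref{def:B+JA}, \(\widetilde{B} = (A+I)^{1/2}\widetilde{H}(A+I)^{1/2}\) with \(A+I\geq I>0\) strictly positive and \(\widetilde{H}\) bounded and boundedly invertible---precisely the setting in which the Second Representation Theorem of \cite{GKMV} is known to hold. The plan is to invoke that result for \(\widetilde{B}\) and then unwind the perturbation by \(J_A\).

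\textbf{Main steps.} First, I transfer the domain stability from \(B\) to \(\widetilde{B}\). Since \(J_A\) is bounded self-adjoint, \(\dom(B) = \dom(\widetilde{B})\) with equivalent graph norms, and by the complex interpolation identity \(\dom(|T|^{1/2}) = [\cH,\dom(T)]_{1/2}\) (valid for any self-adjoint \(T\)), I obtain \(\dom(|\widetilde{B}|^{1/2}) = \dom(|B|^{1/2})\). Combining this with the hypothesis \eqref{eq:dB=dA} and \(\dom((A+I)^{1/2}) = \dom(A^{1/2})\) gives \(\dom(|\widetilde{B}|^{1/2}) = \dom((A+I)^{1/2})\). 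Second, the Second Representation Theorem of \cite{GKMV} applied to \(\widetilde{B}\) yields
\[
\widetilde{\fb}[x,y] = \langle|\widetilde{B}|^{1/2}x,\sign(\widetilde{B})|\widetilde{B}|^{1/2}y\rangle, \quad x,y\in\dom[\widetilde{\fb}] = \dom(A^{1/2}).
\]
Third, I transfer the representation back to \(\fb\) by verifying the pointwise identity
\[
\langle|\widetilde{B}|^{1/2}x,\sign(\widetilde{B})|\widetilde{B}|^{1/2}y\rangle - \langle x,J_A y\rangle = \langle|B|^{1/2}x,\sign(B)|B|^{1/2}y\rangle
\]
on \(\dom(|B|^{1/2}) = \dom(|\widetilde{B}|^{1/2})\). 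Both sides define continuous sesquilinear forms in the (equivalent) graph norms of \(|B|^{1/2}\) and \(|\widetilde{B}|^{1/2}\), and by the spectral theorem both reduce to \(\langle x,By\rangle\) whenever \(y\in\dom(B) = \dom(\widetilde{B})\) (the sign convention \(\sign(0)=0\) causes the spectral projection onto \(\{0\}\) to contribute trivially on both sides). Since \(\dom(B)\) is a core for \(|B|^{1/2}\)---as seen via the spectral truncations \(E_B([-n,n])y\to y\) in the graph norm of \(|B|^{1/2}\)---the identity extends by continuity to all \(y\in\dom(|B|^{1/2})\), completing the proof.

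\textbf{Main obstacle.} The delicate point is the first step---showing that a bounded self-adjoint perturbation preserves the form domain of the possibly indefinite operator \(B\). For indefinite \(B\) this cannot be read off from a direct KLMN argument applied to \(B\) itself, because \(B\) is not semibounded; instead it must be extracted from interpolation between \(\cH\) and \(\dom(B)\), exploiting the equivalence of the graph norms of \(B\) and \(\widetilde{B}\). Once this domain transfer is secured, the remainder of the argument is a clean unwrapping of the \(J_A\) perturbation and a direct appeal to the \cite{GKMV} machinery.
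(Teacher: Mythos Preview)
Your proposal is correct and follows essentially the same route as the paper: both transfer the domain stability to \(\widetilde{B}=B+J_A\) (you via complex interpolation \(\dom(|T|^{1/2})=[\cH,\dom(T)]_{1/2}\), the paper via the Heinz inequality in Lemma~\ref{Heinzapplied}), verify the representation identity on \(\dom(B)\) by reducing both sides to \(\langle x,By\rangle\), and extend by continuity using that \(\dom(B)\) is a core for \(|B|^{1/2}\). The only structural difference is that you invoke the Second Representation Theorem of \cite{GKMV} for \(\widetilde{B}\) as a black box, whereas the paper carries out the continuity-and-density argument directly for the functionals \(y\mapsto\fb[x,y]\) and \(y\mapsto\langle|B|^{1/2}x,\sign(B)|B|^{1/2}y\rangle\) without that intermediate; both are equally valid.
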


Note that this theorem gives the correspondence between the form \(\fb\) and the operator \(B\) under  suitable assumptions. 
However, it is not clear whether Hypothesis \ref{assumption1} already implies condition \eqref{eq:dB=dA}. 
For strictly positive \(A\), an example where \eqref{eq:dB=dA} is not satisfied is given by \cite[Example 2.11]{GKMV}. 
This example however does not satisfy condition \eqref{createsgap} of Hypothesis \ref{assumption1}.

Before we turn to the proof, we need some preparations starting with the well known Heinz Inequality in the formulation of  \cite[Lemma 3.2.3]{S}.
\begin{lemma}[The Heinz inequality]\label{Heinz}

Let \(\cH_1,\cH_2\) be two Hilbert spaces and let \(S\colon \cH_1\to \cH_2\) be a bounded linear operator.
Assume that \(T_1\) and \(T_2\) are self-adjoint injective operators on \(\cH_1\) and \(\cH_2\), respectively. Suppose that \(S\) maps \(\dom(T_1)\) into  \(\dom(T_2)\) and that there is a finite constant \(c\) such that \begin{equation}\label{estim}||T_2Sx||_{\cH_2}\leq c\cdot||T_1x||_{\cH_1}\quad\text{for all}\;x\in \dom(T_1).\end{equation}
Then \(S\) maps \(\dom(T_1^\nu)\) into \(\dom(T_2^\nu)\) for all \(0 \leq \nu\leq 1\).  
\end{lemma}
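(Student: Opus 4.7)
My strategy is the standard complex interpolation argument via the Hadamard three-lines theorem. As a preliminary reduction, since each \(T_i\) is self-adjoint and injective, polar decomposition gives \(T_i=\sgn(T_i)\,|T_i|\) with \(\sgn(T_i)\) a self-adjoint unitary, whence \(\dom(T_i)=\dom(|T_i|)\) and \(\|T_i x\|=\||T_i|x\|\). Interpreting the fractional powers through the spectral calculus as \(T_i^\nu:=|T_i|^\nu\), hypothesis \eqref{estim} is preserved by this replacement and the conclusion for \(|T_i|\) implies the conclusion for \(T_i\); I may therefore assume that \(T_1,T_2\ge 0\), in which case \(T_i^{it}\) is a unitary on all of \(\cH_i\) for every \(t\in\R\).

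The heart of the argument is to construct an analytic family interpolating between \(S\) at \(\nu=0\) and the operator realising the hypothesised bound at \(\nu=1\). Fix \(u\in\cH_1\) and \(v\in\cH_2\) regularised by spectral cut-offs \(u=E_1([1/n,n])u\) and \(v=E_2([1/n,n])v\) of the spectral measures \(E_i\) of \(T_i\) (these are needed because \(T_2^z\) is genuinely unbounded for \(\Re z>0\)) and consider the scalar function
\[
\phi(z) := \langle S\,T_1^{-z}u,\; T_2^{\bar z}v\rangle_{\cH_2},\qquad 0\le \Re z\le 1.
\]
By the spectral theorem, \(\phi\) is bounded and continuous on the closed strip and analytic in its interior. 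I would then verify two boundary estimates: on \(\Re z=0\), unitarity of \(T_1^{-it}\) and \(T_2^{-it}\) (which uses injectivity of the \(T_i\)) yields \(|\phi(it)|\le\|S\|\|u\|\|v\|\); on \(\Re z=1\), rewriting \(\phi(1+it)=\langle T_2 S T_1^{-1-it}u,\,T_2^{-it}v\rangle\) and applying hypothesis \eqref{estim} to \(x:=T_1^{-1-it}u\in\dom(T_1)\) (which satisfies \(T_1 x=T_1^{-it}u\)) gives \(|\phi(1+it)|\le c\|u\|\|v\|\).

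The Hadamard three-lines theorem then delivers \(|\phi(\nu)|\le c^\nu\|S\|^{1-\nu}\|u\|\|v\|\) for every \(0\le\nu\le 1\). Letting the spectral cut-offs tend to infinity, whose ranges are dense precisely because \(T_1,T_2\) are injective, extends this to a bound on \(\langle S T_1^{-\nu}u,\,T_2^\nu v\rangle\) valid for all \(u\in\cH_1\) and \(v\in\dom(T_2^\nu)\). Substituting \(u=T_1^\nu x\) for \(x\in\dom(T_1^\nu)\) produces
\[
|\langle Sx,\,T_2^\nu v\rangle|\le c^\nu\|S\|^{1-\nu}\|T_1^\nu x\|\|v\|,
\]
so that the self-adjointness of \(T_2^\nu\) forces \(Sx\in\dom((T_2^\nu)^*)=\dom(T_2^\nu)\) with the quantitative bound \(\|T_2^\nu Sx\|\le c^\nu\|S\|^{1-\nu}\|T_1^\nu x\|\); this in particular yields the stated inclusion \(S\dom(T_1^\nu)\subseteq\dom(T_2^\nu)\). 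The main obstacle is purely technical: since \(T_2^z\) is unbounded for \(\Re z\in(0,1)\), the formal product \(T_2^z S T_1^{-z}\) is not an everywhere defined bounded operator, and one must pass through a sesquilinear-form formulation with spectral cut-offs in order to apply the three-lines theorem to a genuinely bounded analytic scalar function. The endpoints \(\nu=0\) (boundedness of \(S\)) and \(\nu=1\) (the hypothesis) are immediate, so the content of the lemma lies precisely in the interpolation between them.
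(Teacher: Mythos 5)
The paper does not prove this lemma at all: it is quoted as a known result with a citation to \cite[Lemma 3.2.3]{S}, so there is no in-paper argument to compare against. Your proof is the classical Heinz--Kato complex-interpolation argument (three-lines theorem applied to \(z\mapsto\langle ST_1^{-z}u,\,T_2^{\bar z}v\rangle\) with spectral cut-offs), which is essentially the proof found in the cited reference, and it is correct: the reduction to \(|T_i|\), the boundary estimates using unitarity of the imaginary powers (valid precisely because injectivity kills the spectral mass at \(0\)), and the duality step \(\dom((T_2^\nu)^*)=\dom(T_2^\nu)\) are all sound. The only loose phrase is that the limiting bound holds ``for all \(u\in\cH_1\)''; \(T_1^{-\nu}u\) only makes sense for \(u\in\ran(T_1^\nu)\), but since you immediately substitute \(u=T_1^\nu x\) and approximate by \(E_1([1/n,n])T_1^\nu x\), this is a harmless slip rather than a gap.
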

As a direct consequence of the Heinz Inequality, we get the following corollary.
\begin{corollary}\label{Heinz2}
Let \(T_1,T_2\) be two strictly positive, self-adjoint operators in the Hilbert space \(\cH\). If the domain equality 
\[\dom(T_1)=\dom(T_2)\] holds, then also the domain equality for the roots holds, that is, 
\[\dom(T_1^\nu)=\dom(T_2^\nu)\quad\text{for all }\nu \in [0,1].\] 
\end{corollary}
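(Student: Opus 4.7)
The plan is to apply the Heinz inequality (Lemma \ref{Heinz}) with $\cH_1=\cH_2=\cH$ and $S:=I_\cH$, and then to exploit the symmetric roles of $T_1$ and $T_2$ to obtain both inclusions. Since $T_1$ and $T_2$ are strictly positive and self-adjoint, they are in particular injective, as required by the lemma. The goal is to derive from the single equality $\dom(T_1)=\dom(T_2)$ the continuum of equalities $\dom(T_1^\nu)=\dom(T_2^\nu)$ for $\nu\in[0,1]$.

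The first key step is to verify the norm estimate \eqref{estim}, i.e., to produce a finite constant $c$ with $\|T_2 x\|\leq c\,\|T_1 x\|$ for all $x\in\dom(T_1)$. Strict positivity of $T_1$ yields that $T_1^{-1}$ is a bounded self-adjoint operator on $\cH$ with $\ran(T_1^{-1})=\dom(T_1)=\dom(T_2)$. Hence the composition $T_2T_1^{-1}$ is defined on all of $\cH$. Being the product of a closed operator and a bounded operator, $T_2T_1^{-1}$ is closed, so the closed graph theorem implies that it is bounded. Setting $c:=\|T_2T_1^{-1}\|$, I obtain
\[
\|T_2 x\|=\|T_2T_1^{-1}(T_1 x)\|\leq c\,\|T_1 x\|\quad\text{for all }x\in\dom(T_1).
\]

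Applying Lemma \ref{Heinz} with this choice of $S$, $T_1$, $T_2$ now shows that $I_\cH$ maps $\dom(T_1^\nu)$ into $\dom(T_2^\nu)$ for every $\nu\in[0,1]$; equivalently, $\dom(T_1^\nu)\subseteq\dom(T_2^\nu)$. The hypotheses of the corollary are symmetric in $T_1$ and $T_2$, so exchanging their roles and repeating the same argument yields $\dom(T_2^\nu)\subseteq\dom(T_1^\nu)$, and the two inclusions combine to the claimed equality.

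The argument is essentially mechanical once the Heinz inequality is available; the only step requiring any care is the invocation of the closed graph theorem to upgrade $T_2T_1^{-1}$ from a formal composition to a bounded operator on $\cH$. This is where the strict positivity of $T_1$ enters in an essential way, since it guarantees that $T_1^{-1}$ is a globally defined bounded operator rather than merely an unbounded inverse on $\ran(T_1)$; without it, the closed graph theorem could not be applied.
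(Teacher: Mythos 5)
Your proposal is correct and is precisely the intended argument: the paper gives no separate proof, presenting the corollary as a ``direct consequence of the Heinz Inequality,'' and your realization of that---taking $S=I_\cH$, obtaining the estimate \eqref{estim} via boundedness of $T_2T_1^{-1}$ (closed graph theorem), and symmetrizing in $T_1,T_2$ for the reverse inclusion---is the standard way to fill in the details.
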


\begin{lemma}\label{Heinzapplied}

Let \(B\) and \(J_A\) be the operators in either version of the First Representation Theorem \ref{1repgeneral} or \ref{1repoffdiag}. Then the domain equality
\begin{equation}\label{eq:domB}\dom(|B|^{1/2})=\dom(|B+J_A|^{1/2})\end{equation} holds. 

\begin{proof}
In both situations, we have by Remark \ref{def:B+JA} that \(B+J_A\) is boundedly invertible. The operator \(|B|+I\) is boundedly invertible by functional calculus. Clearly, one has \[\dom(|B+J_A|)=\dom(B+J_A)=\dom(B)=\dom(|B|)=\dom(|B|+I).\]
By Corollary \ref{Heinz2} and equation \eqref{dom}, we have the domain equality
\[\dom(|B+J_A|^{1/2})=\dom((|B|+I)^{1/2})=\dom(|B|^{1/2}).\qedhere\] 
\end{proof}
\end{lemma}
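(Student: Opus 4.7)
The goal is to establish the domain identity $\dom(|B|^{1/2}) = \dom(|B+J_A|^{1/2})$, and the natural strategy is to reduce the question to the Heinz-type corollary (Corollary \ref{Heinz2}), which requires the operators being compared at the $1/2$ power to be strictly positive self-adjoint and to agree at the first power.

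First I would verify the claim at the level of the operators themselves (before taking roots). Since $B$ is self-adjoint by either Theorem \ref{1repgeneral} or Theorem \ref{1repoffdiag}, and $J_A$ is a bounded self-adjoint involution, the sum $B+J_A$ is self-adjoint on $\dom(B+J_A)=\dom(B)$. By Remark \ref{def:B+JA}, $B+J_A$ is boundedly invertible, so $0\notin \sigma(B+J_A)$ and consequently $|B+J_A|$ is strictly positive and self-adjoint with $\dom(|B+J_A|)=\dom(B+J_A)$. On the other hand, $|B|+I\ge I$ is strictly positive by functional calculus and satisfies $\dom(|B|+I)=\dom(|B|)=\dom(B)$. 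Chaining these equalities gives
\[
\dom(|B+J_A|)=\dom(B+J_A)=\dom(B)=\dom(|B|)=\dom(|B|+I).
\]

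Having matched the domains at the first power with both operators strictly positive, Corollary \ref{Heinz2} applies and yields
\[
\dom\bigl(|B+J_A|^{1/2}\bigr)=\dom\bigl((|B|+I)^{1/2}\bigr).
\]
Finally, the functional-calculus identity \eqref{dom} applied to the self-adjoint operator $S=|B|\ge 0$ (with $c=0$) gives $\dom((|B|+I)^{1/2})=\dom(|B|^{1/2})$, and combining the two equalities produces the asserted identity.

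The only mildly delicate point is to make sure that Corollary \ref{Heinz2} is invoked in a setting where both operators really are strictly positive self-adjoint on a common domain; this is guaranteed by the bounded invertibility of $B+J_A$ (which is the content of Remark \ref{def:B+JA}) and by the trivial shift $|B|+I\ge I$. Beyond that, every step is a direct application of functional calculus or of an already-stated lemma, so the argument is short.
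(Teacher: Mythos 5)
Your proposal is correct and follows essentially the same route as the paper: the same chain of domain equalities at the first power, the same invocation of Corollary \ref{Heinz2} for the strictly positive operators $|B+J_A|$ and $|B|+I$, and the same use of \eqref{dom} to remove the shift. The extra care you take in verifying strict positivity of both operators is exactly the implicit content of the paper's appeal to bounded invertibility and functional calculus.
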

We now turn to the proof of the Second Representation Theorem \ref{2rep}.

\begin{proof}[Proof of Theorem \ref{2rep}]
Clearly, by equation \eqref{dom}, we have \[\dom(A^{1/2})=\dom((A+I)^{1/2})\] and, by Lemma \ref{Heinzapplied} \[\dom(|B|^{1/2})=\dom(|B+J_A|^{1/2}).\] 

Taking into account \eqref{eq:dB=dA}, the First Representation Theorem \ref{1repgeneral}, respectively \ref{1repoffdiag}, yields that

\begin{equation*}\fb[x,y]=\langle |B|^{1/2}x, \sign(B)|B|^{1/2}y\rangle\; \text{ for all } x\in\dom(|B|^{1/2}),\;y\in \dom(B).\end{equation*}
Fix \(x\in\dom(|B|^{1/2})\) and define the functionals \(l_1\) and \(l_2\) on \(\dom(A^{1/2})\) by \[l_1(y):=\fb[x,y]=\langle A^{1/2}x,HA^{1/2}y\rangle ,\quad l_2(y):=\langle |B|^{1/2}x,\sign(B)|B|^{1/2}y\rangle.\] These two functionals agree on \(\dom(B)\) and we show that they still agree on the whole of \(\dom(A^{1/2})\).

To do this, we proof that the shifted functionals  \[\tilde{l}_1(y):=l_1(y)+\langle x,J_Ay\rangle ,\quad\tilde{l}_2(y):= l_2(y)+\langle x,J_Ay\rangle\] agree on \(\dom((A+I)^{1/2})=\dom(|B+J_A|^{1/2})\), then also the original functionals \(l_1\) and \(l_2\) agree. 

For the functional \(\tilde{l}_1\), we get the representation  
\[\tilde{l}_1(y)=\langle (A+I)^{1/2}x,\widetilde{H}(A+I)^{1/2}y\rangle\]
with the bounded and boundedly invertible operator \(\widetilde{H}\), see Remark \ref{def:B+JA}.

In a similar way, we get  \[\tilde{l}_2(y)=\langle |B+J_A|^{1/2}x,G|B+J_A|^{1/2}y\rangle\] 
with the bounded, boundedly invertible operator \[G:=\sign(B+J_A).\]
By the boundedness of \(\widetilde{H}\), we have that the functional \(\tilde{l}_1\) is continuous on the Hilbert space \[\big(\dom((A+I)^{1/2}), \langle (A+1)^{1/2}\;\cdot\;,(A+1)^{1/2}\;\cdot\;\rangle\big)=:\cH_{A+I}.\]

A similar argument shows that \(\tilde{l}_2\) is continuous on the Hilbert space  
\[\big(\dom(|B+J_A|^{1/2}), \langle |B+J_A|^{1/2}\;\cdot\;,|B+J_A|^{1/2}\;\cdot\;\rangle\big)=:\cH_{B+J_A}.\]
The domain equality \(\dom((A+I)^{1/2})=\dom(|B+J_A|^{1/2})\) implies that the operator \((A+I)^{1/2}\) is \(|B+J_A|^{1/2}\)-bounded and vice versa. 
Since both operators \(A+I\) and \(|B+J_A|\) are strictly positive, the operators \[(A+I)^{1/2}|B+J_A|^{-1/2}\quad\text{and}\quad|B+J_A|^{1/2}(A+I)^{-1/2}\] are positive and bounded. Consequently, the norms 
\[|x|_{A+I}:=\big\|(A+I)^{1/2}x\big\|\;\text{ and }|x|_{B+J_A}:=\big\| |B+J_A|^{1/2}x\big\|\] are equivalent on the Hilbert space \(\cH_{A+I}\).

Since \(\dom(B)=\dom(B+J_A)=\dom(|B+J_A|)\) is a core for the operator \(|B+J_A|^{1/2}\) (see \cite[Theorem V.3.35]{K}), it follows that \(\dom(B)\) is dense in \(\cH_{A+I}\). The two functionals \(\tilde{l}_1,\tilde{l}_2\) are both closed since both \(\widetilde{H}\) and \(G\) are boundedly invertible.
By the uniqueness of the closure, we have that \(\tilde{l}_1=\tilde{l}_2\text{ on }\cH_{A+I}\) and the claim follows. 
\end{proof}
 
\subsection{Domain stability condition}\label{secDom}

The domain stability condition \[\dom(|B|^{1/2})=\dom(A^{1/2})\] in  the hypothesis  of Theorem \ref{2rep} is in general hard to verify directly. 
We are thus interested in equivalent characterisations and in sufficient criteria for this condition. 
Below, equivalent charcterisations are given in Theorem \ref{equivalent} and sufficient criteria are contained in Lemma \ref{sufficient}.
The conditions and criteria are natural extensions to the ones presented in \cite{GKMV} for strictly positive operators \(A\).

In order to start the investigation of the stability condition, we need the following tools.
The first one is the Second Resolvent Identity (see, e.g. \cite[Section 2.2]{Sch}).

\begin{lemma}[The Second Resolvent Identity]

Let \(T_1,T_2\) be closed linear operators defined on the same domain \(\dom(T_1)=\dom(T_2)\). Assume that the resolvent sets \(\rho(T_1)\) and \(\rho(T_2)\) intersect. Let \(R_\lambda(T_i)=(\lambda I-T_i)^{-1}\) be the resolvent of \(T_i\).
Then, for any \(\lambda \in\rho(T_1)\cap\rho(T_2)\), the difference of the resolvents satisfies
\begin{equation}\label{eq:resolvent}
R_\lambda(T_1)-R_\lambda(T_2)=R_\lambda(T_1)(T_1-T_2)R_\lambda(T_2)=R_\lambda(T_2)(T_1-T_2)R_\lambda(T_1).
\end{equation}
\end{lemma}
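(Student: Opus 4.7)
The plan is to establish the identity by a standard ``insertion of identity'' computation, with the sole care being the bookkeeping of domains. The hypothesis $\dom(T_1)=\dom(T_2)$ ensures that $T_1-T_2$ is a well-defined linear operator on the common domain, and the hypothesis $\lambda\in\rho(T_1)\cap\rho(T_2)$ guarantees that both resolvents are bounded everywhere-defined operators with $\ran R_\lambda(T_j)=\dom(T_j)$ and with the usual left/right inverse relations $(\lambda I-T_j)R_\lambda(T_j)=I_\cH$ and $R_\lambda(T_j)(\lambda I-T_j)x=x$ for $x\in\dom(T_j)$.

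The main calculation then proceeds as follows. Starting from the right-hand side of the first claimed equality, I would rewrite the middle factor as $T_1-T_2=(\lambda I-T_2)-(\lambda I-T_1)$, which is legitimate since every element of $\ran R_\lambda(T_2)=\dom(T_2)=\dom(T_1)$ lies in the domain of both $T_1$ and $T_2$. Distributing over the difference yields
\[
R_\lambda(T_1)(T_1-T_2)R_\lambda(T_2)=R_\lambda(T_1)(\lambda I-T_2)R_\lambda(T_2)-R_\lambda(T_1)(\lambda I-T_1)R_\lambda(T_2).
\]
The first summand collapses to $R_\lambda(T_1)\cdot I_\cH=R_\lambda(T_1)$, since $(\lambda I-T_2)R_\lambda(T_2)=I_\cH$. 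In the second summand, $(\lambda I-T_1)R_\lambda(T_2)$ is well-defined precisely because $R_\lambda(T_2)$ ranges in $\dom(T_2)=\dom(T_1)$; combining it with $R_\lambda(T_1)$ on the left yields $R_\lambda(T_2)$ by the left-inverse identity applied pointwise on $\dom(T_1)$. Taking the difference produces $R_\lambda(T_1)-R_\lambda(T_2)$, proving the first equality.

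For the second equality, I would simply rerun the same argument with the roles of $T_1$ and $T_2$ exchanged, obtaining
\[
R_\lambda(T_2)(T_2-T_1)R_\lambda(T_1)=R_\lambda(T_2)-R_\lambda(T_1),
\]
and then multiply both sides by $-1$. The only real obstacle is the aforementioned domain bookkeeping, namely verifying at each step that the composed operators are applied to vectors in the appropriate subspaces; this is resolved uniformly by the hypothesis $\dom(T_1)=\dom(T_2)$, which is why both sides of the identity make sense in the first place.
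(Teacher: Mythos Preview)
Your argument is correct: the ``insertion of identity'' via $T_1-T_2=(\lambda I-T_2)-(\lambda I-T_1)$ together with the domain equality $\dom(T_1)=\dom(T_2)$ is exactly the standard proof, and your domain bookkeeping is accurate. The paper itself does not supply a proof of this lemma; it simply states the identity and refers to the literature (Schm\"udgen, Section~2.2), so there is nothing further to compare.
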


Another tool we use is the following: 
\begin{lemma}[{\cite[Lemma 3.1]{GKMV}}]\label{comparison}

Let \(\left(\cH,\langle \;\cdot\;,\;\cdot\;\rangle\right)\) and \(\left(\cH',\langle \;\cdot\;,\;\cdot\;\rangle'\right)\) be Hilbert spaces. Assume that \(\cH'\) is continuously imbedded in \(\cH\). 

If \(C\colon \cH \to \cH\) is a bounded map leaving the set \(\cH'\) invariant, then the operator \(C'\) induced by \(C\) on \(\cH'\) is bounded in the topology of \(\cH'\). 
\end{lemma}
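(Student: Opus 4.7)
The plan is to invoke the Closed Graph Theorem applied to the induced operator \(C'\colon\cH'\to\cH'\). Since \(\cH'\) is continuously imbedded in \(\cH\), there exists a constant \(\kappa>0\) with \(\|x\|_\cH\leq \kappa\|x\|_{\cH'}\) for all \(x\in\cH'\). This will be the only quantitative ingredient needed; the rest is purely topological.

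First, I would note that \(C'\) is well-defined on the whole of \(\cH'\) by the invariance assumption \(C\cH'\subseteq \cH'\), and that \(\cH'\), being a Hilbert space, is complete in its own norm. It remains only to verify that the graph of \(C'\) is closed in \(\cH'\times\cH'\).

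To this end, I would take a sequence \((x_n)\subset\cH'\) with \(x_n\to x\) in \(\cH'\) and \(C'x_n\to y\) in \(\cH'\), and show that \(C'x=y\). The continuous imbedding implies that \(x_n\to x\) in \(\cH\) and \(C'x_n=Cx_n\to y\) in \(\cH\). On the other hand, boundedness of \(C\) on \(\cH\) forces \(Cx_n\to Cx\) in \(\cH\). Uniqueness of limits in \(\cH\) then gives \(Cx=y\), and since \(y\in\cH'\) we conclude \(C'x=Cx=y\), so the graph is closed. The Closed Graph Theorem then delivers the boundedness of \(C'\) in the \(\cH'\)-topology.

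There is no genuine obstacle here: the lemma is a routine application of the Closed Graph Theorem, and the continuous imbedding serves precisely to transfer \(\cH'\)-convergence to \(\cH\)-convergence so that the continuity of \(C\) on the ambient space can be exploited. The only point worth being careful about is to identify \(\cH'\)-limits with \(\cH\)-limits before invoking continuity of \(C\) in \(\cH\), rather than attempting to argue directly in \(\cH'\) where no continuity of \(C\) is a priori available.
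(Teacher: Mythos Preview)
Your proof via the Closed Graph Theorem is correct and is the standard argument for this statement. Note that the paper does not actually prove this lemma: it is quoted verbatim from \cite[Lemma 3.1]{GKMV} as an auxiliary tool, so there is no ``paper's own proof'' to compare against beyond that citation; the Closed Graph argument you give is precisely the canonical one.
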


In the following investigations, we want to consider the sign of the operator \(B\) as a unitary operator. 
Since \(B\) may have a kernel, we need to choose the sign of zero to be either \(+1\) or \(-1\). All the following statements are independent of this concrete choice, so we leave this choice open. However, in Lemma \ref{sufficient} below, it is convenient to have this freedom of choice.
We define the unitary version of the sign by
\begin{equation}\label{p:sgn}\sgn(x):=\begin{cases} -1,&  x<0,\\ s,& x=0,\\ +1,& x>0,\end{cases} \end{equation} for some \(s\in \{-1,1\}\). 
Note that, by functional calculus, \(\sign(B)f(B)=\sgn(B)f(B)\) for any function \(f\) satisfying \(f(0)=0\).
Furthermore, since the interval \((-1,1)\) is not contained in the range of the function \(f\) defined by \(f(x):=x+\sgn(x)\), the operator \(B+\sgn(B)\) is boundedly invertible by functional calculus.

We now need the following observations. 

\begin{lemma}\label{tildes}
Let the assumptions of the First Representation Theorem \ref{1repgeneral} or \ref{1repoffdiag} be satisfied.

Then, the indefinite operators  
\begin{equation}\label{tildeX}(A+I)^{1/2}(B+\sgn(B))^{-1}(A+I)^{1/2}\text{ defined on }\dom(A^{1/2})\end{equation} and \begin{equation}\label{tildeY}(A+I)^{-1/2}(B+\sgn(B))(A+I)^{-1/2}\;\text{ defined on the dense set }(A+I)^{1/2}\dom(B)\end{equation} 
can be extended, by closure, to bounded operators on \(\cH\). Since they are inverse to each other, they are boundedly invertible.
\begin{proof}
The operator in \eqref{tildeX} is obviously  densely defined. {}From \(\dom(B)=\dom(B+J_A)\) and Remark \ref{def:B+JA}, it follows that
\[(A+I)^{1/2}\dom(B)=\widetilde{H}^{-1}\dom(A^{1/2}),\] so that the operator in \eqref{tildeY} also is densely defined.
Since \(B+J_A=(A+I)^{1/2}\widetilde{H}(A+I)^{1/2}\) is boundedly invertible, we have that \[L:= (A+I)^{1/2}(B+J_A)^{-1}(A+I)^{1/2}\] is bounded.

Since both operators \(B+\sgn(B)\) and \(B+J_A\)  are closed, boundedly invertible and defined on \(\dom(B)\), we have that \(0\in \rho(B+\sgn(B))\cap\rho(B+J_A)\). We now apply the Second Resolvent Identity \eqref{eq:resolvent} in both variants. Setting for brevity \(J:=\sgn(B)\) and \(S:=J-J_A\), we obtain that

\[\begin{aligned}(B+\sgn(B))^{-1}&=(B+J_A)^{-1}+(B+J_A)^{-1}S(B+J)^{-1}\\& =(B+J_A)^{-1}+(B+J_A)^{-1}S\big((B+J_A)^{-1}+(B+J)^{-1}S(B+J_A)^{-1}\big).\end{aligned}\]
Thus, we get that

\[\begin{aligned}&(A+I)^{1/2}(B+\sgn(B))^{-1}(A+I)^{1/2}\\
& = L+L(A+I)^{-1/2}S(A+I)^{-1/2}L+ 
L(A+I)^{-1/2}S(B+J)^{-1}S(A+I)^{-1/2}L\end{aligned}\] is bounded. 
By the identity \(B+J_A=(A+I)^{1/2}\widetilde{H}(A+I)^{1/2}\), the operator \[M:=(A+I)^{-1/2}(B+J_A)(A+I)^{-1/2}\] is a bounded operator on its natural domain \((A+I)^{1/2}\dom(B)\). 
Thus \[(A+I)^{-1/2}(B+\sgn(B))(A+I)^{-1/2}=M+(A+I)^{-1/2}(J-J_A)(A+I)^{-1/2}\] is bounded.
\end{proof}   
\end{lemma}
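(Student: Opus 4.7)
My plan is to exploit the representation $\widetilde{B}=B+J_A=(A+I)^{1/2}\widetilde{H}(A+I)^{1/2}$ from Remark \ref{def:B+JA}, where $\widetilde{H}$ is bounded and boundedly invertible, and to treat $\sgn(B)$ as a bounded self-adjoint perturbation of $J_A$. Throughout, I would use the fact noted immediately before the lemma that $B+\sgn(B)$ is already boundedly invertible on $\cH$ by functional calculus, so that $(B+\sgn(B))^{-1}$ is a ``free'' bounded operator on $\cH$ which I can freely insert into computations.

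First I would dispatch \eqref{tildeY}, as the argument there is essentially algebraic. The plan is to split
\[
(A+I)^{-1/2}(B+\sgn(B))(A+I)^{-1/2}=(A+I)^{-1/2}(B+J_A)(A+I)^{-1/2}+(A+I)^{-1/2}(\sgn(B)-J_A)(A+I)^{-1/2}.
\]
The first summand coincides with $\widetilde{H}$ on $(A+I)^{1/2}\dom(B)$, while the second is a product of three bounded operators and is therefore manifestly bounded on $\cH$. Density of the natural domain follows from $(A+I)^{1/2}\dom(B)=\widetilde{H}^{-1}\dom(A^{1/2})$, which is the image of a dense set under the bounded bijection $\widetilde{H}^{-1}$.

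For \eqref{tildeX}, I would apply the Second Resolvent Identity twice. With $J:=\sgn(B)$ and $S:=J-J_A$, both $B+J_A$ and $B+J$ are boundedly invertible with the common domain $\dom(B)$, and combining the two equivalent forms of the resolvent identity yields an expression of the shape
\[
(B+J)^{-1}=(B+J_A)^{-1}\pm(B+J_A)^{-1}S(B+J_A)^{-1}+(B+J_A)^{-1}S(B+J)^{-1}S(B+J_A)^{-1}.
\]
Conjugating by $(A+I)^{1/2}$ on both sides, the first two summands reduce to $\widetilde{H}^{-1}$ and $\widetilde{H}^{-1}(A+I)^{-1/2}S(A+I)^{-1/2}\widetilde{H}^{-1}$, both bounded on $\cH$. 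For the third summand, the decisive identity is
\[
(A+I)^{1/2}(B+J_A)^{-1}=\widetilde{H}^{-1}(A+I)^{-1/2},
\]
which follows from the explicit form $(B+J_A)^{-1}=(A+I)^{-1/2}\widetilde{H}^{-1}(A+I)^{-1/2}$; this lets me regroup the third summand as a product of bounded operators on $\cH$ with the bounded $(B+J)^{-1}$ sitting harmlessly in the middle.

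The mutual invertibility of the two extensions should then be a formal check: the composition acts as the identity on the dense natural domains $\dom(A^{1/2})$ and $(A+I)^{1/2}\dom(B)$ by construction, and extends to $\cH$ by continuity of both factors. The hard part, as I anticipate, is the third summand above: the natural impulse is to conjugate the inner $(B+J)^{-1}$ by $(A+I)^{\pm 1/2}$, which would reproduce the very quantity one is trying to bound and render the argument circular. The correct move is to leave $(B+J)^{-1}$ alone as a bounded operator on $\cH$ and to absorb the $(A+I)^{\pm 1/2}$ factors into the outer $(B+J_A)^{-1}$'s via the explicit formula above.
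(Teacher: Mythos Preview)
Your proposal is correct and follows essentially the same route as the paper: for \eqref{tildeY} you split off the bounded perturbation $\sgn(B)-J_A$ exactly as the paper does (its operator $M$ is your $\widetilde{H}$), and for \eqref{tildeX} you iterate the Second Resolvent Identity and then, crucially, leave the inner $(B+J)^{-1}$ untouched as a bounded operator while absorbing $(A+I)^{1/2}$ into the outer $(B+J_A)^{-1}$'s via $(A+I)^{1/2}(B+J_A)^{-1}=\widetilde{H}^{-1}(A+I)^{-1/2}$, which is precisely the paper's manoeuvre (its $L$ equals your $\widetilde{H}^{-1}$ on $\dom(A^{1/2})$). Your caution about the sign in the middle term is well placed but, as you note, irrelevant for boundedness.
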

Remark that the operators \eqref{tildeX} and \eqref{tildeY} in the lemma above can be extended to bounded operators. If we consider the same operators, only with 
the absolute value \(|B+\sgn(B)|\) instead of \(B+\sgn(B)\), this extension property is equivalent to the domain stability condition \(\dom(|B|^{1/2})=\dom(A^{1/2})\), see the theorem below.

\begin{theorem}[cf.~ {\cite[Theorem 3.2]{GKMV}} ]\label{equivalent}
Let \(B\) be  the operator associated with the form \(\fb\) in either Theorem \ref{1repgeneral} or \ref{1repoffdiag}. 
Then the following statements are equivalent:
\begin{itemize}

\item[(i)] \quad\(\dom(|B|^{1/2})=\dom(A^{1/2})\),

\item[(ii)] \quad\(\dom(|B|^{1/2})\supseteq\dom(A^{1/2})\),

\item[(ii')] \quad\(\dom(|B|^{1/2})\subseteq\dom(A^{1/2})\),

\item[(iii)] \quad\(X:=(A+I)^{-1/2}|B+\sgn(B)|(A+I)^{-1/2}\) is a bounded symmetric operator on \[\dom(X):=(A+I)^{1/2}\dom(B),\]

\item[(iii')] \quad\(Y:=(A+I)^{1/2}|B+\sgn(B)|^{-1}(A+I)^{1/2}\) is a bounded symmetric operator on \[\dom(Y):=\dom(A^{1/2}),\]

\item[(iv)] \quad\(K:=(A+I)^{1/2}\sgn(B)(A+I)^{-1/2}\) is a bounded involution on \(\cH\),

\item[(v)] \quad\(\sgn(B)\dom(A^{1/2})\subseteq \dom(A^{1/2})\).
\end{itemize}
\begin{proof} 
For brevity, set \(J:=\sgn(B)\). The implication (i) \(\Rightarrow\) (ii) is obvious.  

(ii) \(\Rightarrow\) (iii):\quad Since \(\dom(A^{1/2})\subseteq\dom(|B|^{1/2})=\dom(|B+J|^{1/2})\), we have that the operator \(|B+J|^{1/2}(A+I)^{-1/2}\) has domain \(\cH\) and is thus bounded. Define the positive form 
\[\mathfrak{r}[x,y]:=\langle x,Xy\rangle\quad \text{on}\quad \dom[\mathfrak{r}]:=\dom(X)=(A+I)^{1/2}\dom(B).\]
This form can be represented as a bounded form 
\[\mathfrak{r}[x,y]=\langle |B+J|^{1/2}(A+I)^{-1/2}x,|B+J|^{1/2}(A+I)^{-1/2}y\rangle.\]
Thus,  the associated operator \(X\) is bounded . Note that \(\dom(X)\) is dense by Lemma \ref{tildes}, so that the closure of \(X\) is a bounded operator on \(\cH\).

(ii') \(\Rightarrow\) (iii'):\quad Similarly to the previous implication, the operator \((A+I)^{1/2}|B+J|^{-1/2}\) is bounded and the densely defined positive form 
\[\mathfrak{n}[x,y]:=\langle x,Yy\rangle\quad \text{on}\quad \dom[\mathfrak{n}]:=\dom(Y)=\dom(A)^{1/2}\]
can be represented as a bounded form 
\[\mathfrak{n}[x,y]=\langle (A+I)^{1/2}|B+J|^{-1/2}x,(A+I)^{1/2}|B+J|^{-1/2}y\rangle.\]
Thus, the closure of \(Y\) is a bounded operator on \(\cH\).

(iii) \(\Rightarrow\) (iv):\quad The operator \(K\) is closed on its natural domain \[\dom(K)=\{x\in\cH\;|\;\sgn(B)(A+I)^{-1/2}x\in \dom((A+I)^{1/2})\}.\]
Furthermore, since \(\dom(B)\subseteq \dom(A^{1/2})\) and \(\sgn(B)\) leaves \(\dom(B)\) invariant, we have  \[\dom(X)=(A+I)^{1/2}\dom(B)\subseteq \dom(K).\]

Let \(x\in \dom(X)\), then, taking into account \(\sgn(B+J)=\sgn(B)\), it follows that

\[\begin{aligned}Kx&=(A+I)^{1/2}\sgn(B)(A+I)^{-1/2}x=(A+I)^{1/2}\sgn(B+J)(A+I)^{-1/2}x\\ &=(A+I)^{1/2}(B+J)^{-1}|B+J|(A+I)^{-1/2}x=\big((A+I)^{1/2}(B+J)^{-1}(A+I)^{1/2}\big)Xx.
\end{aligned}\]
By hypothesis and  Lemma \ref{tildes}, respectively, both operators in the product can be extended to bounded operators on \(\cH\). Thus \(K|_{\dom(X)}\) can be boundedly extended to \(\cH\). By the closedness of \(K\), it follows that \(K\) is bounded with \(\dom(K)=\cH\) and it is an involution since \(K^2=I\).

(iii') \(\Rightarrow\) (iv):\quad As in the implication before, \((A+I)^{1/2}\dom(B)\subseteq \dom(K)\) is dense. 

Let \(x\in(A+I)^{1/2}\dom(B)\), then in the same way 
\[\begin{aligned}Kx&=(A+I)^{1/2}\sgn(B+J)(A+I)^{-1/2}x=(A+I)^{1/2}|B+J|^{-1}(B+J)(A+I)^{-1/2}x\\&=\big((A+I)^{1/2}|B+J|^{-1}(A+I)^{1/2}\big)\cdot\big((A+I)^{-1/2}(B+J)(A+I)^{-1/2}\big)x\\&=Y\big((A+I)^{-1/2}(B+J)(A+I)^{-1/2}\big)x
,\end{aligned}\]
where both operators in the product can be boundedly extended to \(\cH\) by Lemma \ref{tildes}. As before, \(K\) is a bounded involution on \(\dom(K)=\cH\).

(iv) \(\Rightarrow\) (v):\quad Since \(\dom(K)=\cH\) by assumption, \(\sgn(B)\) leaves \(\dom(A^{1/2})\) invariant.

(v) \(\Rightarrow\) (i): \quad We first consider the case of strictly positive \(\widetilde{H}\). Then, the positiv-definite form 
\[ \tilde{\fb}[x,y]=\langle (A+I)^{1/2}x,\widetilde{H}(A+I)^{1/2}y\rangle  \quad\text{on }\dom[\tilde{\fb}]=\dom(A^{1/2})\] 
can be represented as \[\tilde{\fb}[x,y]=\langle \widetilde{H}^{1/2}(A+I)^{1/2}x,\widetilde{H}^{1/2}(A+I)^{1/2}y\rangle, \]
so that \(\tilde{\fb}\) is closed by the closedness  the operator \(\widetilde{H}^{1/2}(A+I)^{1/2}\).

The First Representation Theorem for non-negative forms \cite[Theorem VI.2.6]{K} implies the existence of a non-negative operator \(\widetilde{B}\) associated with the form \(\tilde{\fb}\). By construction, we have \(\widetilde{B}=B+J_A\).

The Second Representation Theorem for positive semi-definite quadratic forms \cite[Theorem VI.2.23]{K} yields the equality \(\dom[\tilde{\fb}]=\dom(\widetilde{B}^{1/2})\).

Using the domain equality \eqref{eq:domB}, the claim follows by observing \[\dom[\tilde{\fb}]=\dom[\fb]\quad\text{and}\quad\dom(\widetilde{B}^{1/2})=\dom(B^{1/2}).\]
We now consider the case, where \(\widetilde{H}\) is not necessarily positive.
Define the Hilbert space \[\cH_{A+I}:=\left(\dom(A^{1/2}),\langle (A+I)^{1/2}\;\cdot\;,(A+I)^{1/2}\;\cdot\;\rangle\right).\] Let \(J_{A+I}\) be the operator induced by \(\sgn(B)\) on \(\cH_{A+I}\). The space \(\cH_{A+I}\) is continuously imbedded in \(\cH\) and, by part (v), the operator \(\sgn(B)\) leaves \(\cH_{A+I}\) invariant (as a set). Then, by Lemma \ref{comparison}, the operator \(J_{A+I}\) is continuous on \(\cH_{A+I}\). Since \(J^2=I\), we even have  that \(J_{A+I}\) is a bounded involution, not necessarily unitary. 

It follows that \(K=(A+I)^{1/2}J(A+I)^{-1/2}\) is a bounded involution on \(\cH\).

Observing \(\sgn(B+J)=\sgn(B)\) and \(\dom(B)\subseteq\dom(A^{1/2})\), we have \[\begin{aligned}|B+J|&=(B+J)\sgn(B+J)=(B+J)J\\&= (A+I)^{1/2}(A+I)^{-1/2}(B+J)(A+I)^{-1/2}(A+I)^{1/2}J(A+I)^{-1/2}(A+I)^{1/2}\\&= (A+I)^{1/2}\widetilde{Y}K(A+I)^{1/2},\end{aligned}\]
where we abbreviated \(\widetilde{Y}:=(A+I)^{-1/2}(B+J)(A+I)^{-1/2}\) for the operator in Lemma \ref{tildes}.
Since \(|B+J|\) is non-negative, \(\widetilde{Y}K\) also has to be non-negative. Both \(\widetilde{Y}\) and \(K\) are Hilbert space isomorphisms. Hence, the self-adjoint operator \(\widetilde{Y}K\) has a bounded inverse and is thus strictly positive. Considering the positive form \[\hat{\fb}[x,y]=\langle (A+I)^{1/2}x, (\widetilde{Y}K)(A+I)^{1/2}y\rangle,\quad x,\;y\in \dom(A^{1/2})\] associated with the operator \(|B+J|\), we get from the first case and the functional calculus that \[\dom(A^{1/2})=\dom(|B+J|^{1/2})=\dom(|B|^{1/2})\]
holds. This completes the proof. 
\end{proof}
\end{theorem}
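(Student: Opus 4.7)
The plan is to establish the seven statements as equivalent via a short cycle of implications, writing $J := \sgn(B)$ and using throughout the domain identity $\dom(|B|^{1/2}) = \dom(|B+J_A|^{1/2})$ from Lemma \ref{Heinzapplied}. The implications (i) $\Rightarrow$ (ii), (i) $\Rightarrow$ (ii'), and (iv) $\Rightarrow$ (v) are immediate from set inclusion, so the real work is to prove (ii) $\Rightarrow$ (iii) $\Rightarrow$ (iv), the symmetric chain (ii') $\Rightarrow$ (iii') $\Rightarrow$ (iv), and (v) $\Rightarrow$ (i).

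For (ii) $\Rightarrow$ (iii), I would use that the closed operator $|B+J|^{1/2}(A+I)^{-1/2}$ is everywhere defined on $\cH$ by hypothesis, hence bounded by the closed graph theorem. The bounded symmetric form $\langle |B+J|^{1/2}(A+I)^{-1/2}x,\, |B+J|^{1/2}(A+I)^{-1/2}y\rangle$ agrees with $\langle x, Xy\rangle$ on the dense domain $(A+I)^{1/2}\dom(B)$, so $X$ extends by closure to a bounded operator on $\cH$. The argument for (ii') $\Rightarrow$ (iii') is completely parallel, with $(A+I)^{1/2}|B+J|^{-1/2}$ in place of $|B+J|^{1/2}(A+I)^{-1/2}$.

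To deduce (iv) from (iii) or (iii'), I would exploit the identity $\sgn(B+J) = \sgn(B)$ to factor $K$ on a dense set as either
\begin{equation*}
K = \bigl((A+I)^{1/2}(B+J)^{-1}(A+I)^{1/2}\bigr)\, X \quad\text{or}\quad K = Y\,\bigl((A+I)^{-1/2}(B+J)(A+I)^{-1/2}\bigr).
\end{equation*}
By Lemma \ref{tildes}, each factor other than $X$ or $Y$ extends to a bounded operator on $\cH$, so $K$ restricted to the corresponding dense subspace is bounded; closedness of $K$ then yields $\dom(K) = \cH$, and $K^2 = I$ makes it an involution.

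The main obstacle is (v) $\Rightarrow$ (i). I would treat first the auxiliary case where $\widetilde{H}$ is strictly positive: then $\tilde{\fb}[x,y] = \langle \widetilde{H}^{1/2}(A+I)^{1/2}x,\, \widetilde{H}^{1/2}(A+I)^{1/2}y\rangle$ is a closed non-negative form on $\dom(A^{1/2})$ whose associated operator is $\widetilde{B} = B + J_A$, so the classical Second Representation Theorem \cite[Theorem VI.2.23]{K} gives $\dom(\widetilde{B}^{1/2}) = \dom(A^{1/2})$, and Lemma \ref{Heinzapplied} concludes this case. For general boundedly invertible $\widetilde{H}$, I would apply Lemma \ref{comparison} to the continuously embedded Hilbert space $\cH_{A+I} = (\dom(A^{1/2}), \langle (A+I)^{1/2}\cdot, (A+I)^{1/2}\cdot\rangle)$, which by (v) is invariant under $\sgn(B)$, to conclude that $K$ is bounded on $\cH$. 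Writing
\begin{equation*}
|B+J| = (B+J)J = (A+I)^{1/2}(\widetilde{Y}K)(A+I)^{1/2}, \qquad \widetilde{Y} := (A+I)^{-1/2}(B+J)(A+I)^{-1/2},
\end{equation*}
the non-negativity of $|B+J|$ together with the fact that $\widetilde{Y}$ and $K$ are Hilbert space isomorphisms (Lemma \ref{tildes} plus (iv)) forces $\widetilde{Y}K$ to be a strictly positive bounded self-adjoint operator, reducing the general case to the first with $\widetilde{Y}K$ playing the role of $\widetilde{H}$.
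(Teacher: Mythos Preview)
Your proposal is correct and follows essentially the same approach as the paper's proof: the same implication scheme $(i)\Rightarrow(ii)\Rightarrow(iii)\Rightarrow(iv)\Rightarrow(v)\Rightarrow(i)$ with the parallel branch $(i)\Rightarrow(ii')\Rightarrow(iii')\Rightarrow(iv)$, the same factorisations of $K$ via Lemma~\ref{tildes}, and the same two-case reduction for $(v)\Rightarrow(i)$ using Lemma~\ref{comparison} and the identity $|B+J|=(A+I)^{1/2}(\widetilde{Y}K)(A+I)^{1/2}$ to reduce to the strictly positive case. The only cosmetic difference is that you invoke the closed graph theorem explicitly where the paper simply says ``has domain $\cH$ and is thus bounded''.
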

We now give sufficient, but in general not necessary, criteria for the domain stability condition. 
These criteria were introduced in \cite[Lemma 3.6]{GKMV} for the case of strictly positive \(A\). The following lemma shows that they can be extended to the case of non-negative \(A\).

\begin{lemma}[cf. {\cite[Lemma 3.6]{GKMV}}]\label{sufficient}
Let the assumptions of Theorem \(\ref{1repgeneral}\) be satisfied, and let \(B\) be the operator associated with the form \(\fb\). If one of the following conditions 
\begin{itemize}
\item[(a)] the operator \(H\) maps \(\dom(A^{1/2})\) onto itself; 
\item[(b)] the operator \(H\) is strictly positive or strictly negative;
\item[(c)] the operator \(B\) is semi-bounded
\end{itemize} 
holds, then the domain stability condition 
\begin{equation}\label{eq:domcond}\dom(|B|^{1/2})=\dom(A^{1/2})\end{equation} is satisfied.

\begin{proof}
\begin{itemize}
\item[(a)] Since \(H\) is bijective as a map on \(\dom(A^{1/2})\) by assumption, the natural domain of the operator \(B=A^{1/2}HA^{1/2}\) coincides with \(\dom(A)\). Thus, the domains of the positive, boundedly invertible operators \(A+I\) and \(|B|+I\) coincide. The domain stability condition \eqref{eq:domcond} now follows from Corollary \ref{Heinz2} and equation \eqref{dom}. 
\item[(b)] If the operator \(H\) is strictly positive, then \(B\) is non-negative, but may still have a kernel. Choosing \(\sgn(0)=1\), we have \(\sgn(B)=I\). In this case the condition (v) in  Theorem \ref{equivalent} is trivially satisfied. If \(H\) is strictly negative, choose \(\sgn(0)=-1\) so that in this case \(\sgn(B)=-I\) and condition (v) is again satisfied.

\item[(c)]  Without loss of generality, assume \(B\) to be bounded from below.  By Remark \ref{def:B+JA}, the operator \(B+J_A\) is boundedly invertible. Since \(B\) is bounded from below and \(J_A\) is bounded, the operator \(B+J_A+cI\) is strictly positive for all sufficiently large  constants \(c>0\). For such constants, we have the representation
\begin{equation}\label{B+JA}B+J_A+cI=(A+I)^{1/2}(\widetilde{H}+c(A+I)^{-1})(A+I)^{1/2}\end{equation} on \(\dom(B)\subseteq\dom(A^{1/2})\), where \(\widetilde{H}\) is bounded and  boundedly invertible. Since the left hand side of \eqref{B+JA} is a non-negative operator, we have that \(\widetilde{H}+c(A+I)^{-1}\geq 0\) holds. We will show that this operator is even strictly positive. To see this, it suffices to verify that \(\widetilde{H}+c(A+I)^{-1}\) is boundedly invertible. 
Note that \[\widetilde{H}+c(A+I)^{-1}=c\widetilde{H}\left(c^{-1}I+\widetilde{H}^{-1}(A+I)^{-1}\right),\] so this operator is invertible if 

\[-c^{-1}\notin \sigma(\widetilde{H}^{-1}(A+I)^{-1}).\] Recall that for bounded self-adjoint operators \(T_1,T_2\) the well known spectral identity 
\[\sigma(T_1T_2)\setminus \{0\}=\sigma(T_2T_1)\setminus\{0\}\] holds, see, e.g., \cite[Exercise 2.4.11]{Sch}. Thus, the operator has a bounded inverse if \[-c^{-1}\in \rho((A+I)^{-1/2}\widetilde{H}^{-1}(A+I)^{-1/2})=\rho((B+J_A)^{-1}).\] 
Since \(B+J_A\) is, by assumption, bounded from below and boundedly invertible, the negative spectrum of \(B+J_A\) is contained in a bounded interval away from zero, so that \(-c^{-1}\) does not belong to the spectrum if \(c\) is sufficiently large. For those \(c\), the operator \(\widetilde{H}+c(A+I)^{-1}\) is then strictly positive. Form part (b) of the present lemma and \eqref{B+JA}, we deduce that
\[\dom((B+J_A+cI)^{1/2})=\dom((A+I)^{1/2}).\] By Lemma \ref{Heinzapplied} and \eqref{dom}, the equality of the domains of \(A^{1/2}\)  and \(|B|^{1/2}\) holds. \end{itemize}
\end{proof}
\end{lemma}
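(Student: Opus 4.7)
The plan is to show that bijectivity of $H$ on $\dom(A^{1/2})$ forces $\dom(B)=\dom(A)$, after which Corollary~\ref{Heinz2} and equation~\eqref{dom} finish the job. Since $H^{-1}$ is bounded on $\cH$ and, by hypothesis, $H$ maps $\dom(A^{1/2})$ onto itself, $H$ restricts to a bijection of $\dom(A^{1/2})$ onto itself, and so does $H^{-1}$. From the description $\dom(B)=\{x\in\dom(A^{1/2}) : HA^{1/2}x\in\dom(A^{1/2})\}$ given in Theorem~\ref{1repgeneral}, applying $H$ and $H^{-1}$ identifies this set with $\{x\in\dom(A^{1/2}) : A^{1/2}x\in\dom(A^{1/2})\}=\dom(A)$. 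Consequently $A+I$ and $|B|+I$ are both strictly positive self-adjoint operators sharing the domain $\dom(A)=\dom(B)$, so Corollary~\ref{Heinz2} yields $\dom((A+I)^{1/2})=\dom((|B|+I)^{1/2})$, which by \eqref{dom} is exactly \eqref{eq:domcond}.

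\noindent\textbf{Case (b).} Here I would reduce directly to condition (v) of Theorem~\ref{equivalent}. If $H\geq \alpha I>0$, then for any $y\in\dom(B)$ one has $\langle y,By\rangle=\langle A^{1/2}y,HA^{1/2}y\rangle\geq 0$, so $B$ is non-negative. Choosing $\sgn(0)=+1$, functional calculus gives $\sgn(B)=I$, so trivially $\sgn(B)\dom(A^{1/2})\subseteq \dom(A^{1/2})$, i.e.\ condition (v) of Theorem~\ref{equivalent} is satisfied; the equivalence (v)\,$\Rightarrow$\,(i) delivers \eqref{eq:domcond}. The strictly negative case is symmetric, with the choice $\sgn(0)=-1$ yielding $\sgn(B)=-I$.

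\noindent\textbf{Case (c).} This is the main obstacle, and the plan is to shift into the setting of case (b) by making $B+J_A+cI$ strictly positive for large $c$. Without loss of generality assume $B$ is bounded from below. Since $J_A$ is bounded and $B+J_A$ is boundedly invertible by Remark~\ref{def:B+JA}, for all sufficiently large $c>0$ the operator $B+J_A+cI$ is strictly positive, and Remark~\ref{def:B+JA} supplies the factorisation
\begin{equation*}
B+J_A+cI=(A+I)^{1/2}\bigl(\widetilde{H}+c(A+I)^{-1}\bigr)(A+I)^{1/2}
\end{equation*}
on $\dom(B)$. The technical heart of the proof is verifying that the middle factor $\widetilde{H}+c(A+I)^{-1}$ is itself strictly positive: non-negativity follows from the identity above together with strict positivity of $(A+I)^{1/2}$, and bounded invertibility I would establish by factoring
\[
\widetilde{H}+c(A+I)^{-1}=c\widetilde{H}\bigl(c^{-1}I+\widetilde{H}^{-1}(A+I)^{-1}\bigr)
\]
and using the spectral identity $\sigma(T_1T_2)\setminus\{0\}=\sigma(T_2T_1)\setminus\{0\}$ to rewrite the condition $-c^{-1}\notin\sigma(\widetilde{H}^{-1}(A+I)^{-1})$ as $-c^{-1}\in\rho((B+J_A)^{-1})$, which holds for $c$ large since $B+J_A$ is boundedly invertible and bounded from below (so its negative spectrum lies in a bounded interval away from zero). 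Once strict positivity of $\widetilde{H}+c(A+I)^{-1}$ is in hand, part (b) applied to the pair $(A+I,\widetilde{H}+c(A+I)^{-1})$ gives $\dom((B+J_A+cI)^{1/2})=\dom((A+I)^{1/2})$, and Lemma~\ref{Heinzapplied} combined with \eqref{dom} transfers this equality into $\dom(|B|^{1/2})=\dom(A^{1/2})$.
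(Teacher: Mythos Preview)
Your proof is correct and follows essentially the same route as the paper's: in (a) you identify $\dom(B)=\dom(A)$ and invoke Corollary~\ref{Heinz2}; in (b) you reduce to condition~(v) of Theorem~\ref{equivalent} via $\sgn(B)=\pm I$; and in (c) you shift to $B+J_A+cI$, establish strict positivity of the middle factor $\widetilde{H}+c(A+I)^{-1}$ through the same factorisation and spectral identity $\sigma(T_1T_2)\setminus\{0\}=\sigma(T_2T_1)\setminus\{0\}$, and then feed the result back into part~(b). The arguments match the paper's essentially line by line.
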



\end{document}